\documentclass[11pt,a4paper]{article}
\usepackage{amsmath,amssymb,amsfonts,amsthm,mathtools}
\usepackage{bm}
\usepackage{graphicx}
\usepackage{graphicx}
\usepackage{subcaption}
\usepackage{booktabs}
\usepackage{multirow}
\usepackage{array}
\usepackage{enumitem}
\usepackage{geometry}
\usepackage{setspace}
\usepackage{cite}
\usepackage{url}
\usepackage{hyperref}
\usepackage{xcolor}
\usepackage{ragged2e}
\hypersetup{
    colorlinks=true,
    linkcolor=blue,
    citecolor=blue,
    urlcolor=blue
}

\newtheorem{theorem}{Theorem}[section]

\newcommand{\Rzero}{\mathcal{R}_0}

\begin{document}

\title{\textbf{Threshold Geometry, Bifurcation, and Data-Driven Analysis of a Vaccination-Treatment Model for Hepatitis B}}

\author{
    Mustaq Ahmad$^{1}$
    \and
    A. S. Bhadauria$^{1}$\thanks{Corresponding author: archana.mathstat@ddugu.ac.in}
}

\date{}

\maketitle


\begin{abstract}
Hepatitis B virus (HBV) remains a major public health challenge despite the availability of effective vaccination and antiviral treatment. Mathematical models can provide insight into the threshold conditions governing disease persistence and the intervention intensity required for disease control. In this study, we formulate and rigorously analyze a vaccination-treatment model for HBV transmission. The disease-free and endemic equilibria are characterized, and the basic reproduction number $\mathcal{R}_0$ is derived using the next-generation matrix approach. The threshold condition $\mathcal{R}_0=1$ is shown to govern the transition between disease-free and endemic regimes, and a forward transcritical bifurcation is established analytically with respect to the transmission parameter. The analytical results are supported by numerical equilibrium continuation and eigenvalue-based stability calculations. A two-parameter analysis in the $(\beta,\nu_1)$ plane further reveals the threshold geometry associated with the interaction between transmission and vaccination, while endemic-equilibrium surfaces and local and global sensitivity analyses quantify the effects of key model parameters. The model is subsequently parameterized using India-specific epidemiological information, including HBsAg prevalence, HBV incidence, and mortality estimates. Constrained nonlinear calibration provides a close fit to the prescribed epidemiological targets, while multi-start optimization and profile-based practical identifiability analysis reveal the extent of parameter uncertainty. The robustness analysis further shows that the reference calibrated value $\mathcal{R}_0^*=1.1744$ is associated with a super-threshold regime, although practically compatible parameter combinations can produce both sub-threshold and super-threshold values. The combined analytical and data-driven framework therefore links epidemic thresholds, bifurcation structure, intervention effects, and parameter uncertainty, providing a quantitative basis for interpreting HBV persistence and control under limited epidemiological information.
\medskip
\noindent

\textbf{Keywords:}
Hepatitis B virus; Transcritical bifurcation; Threshold geometry; Data-driven modeling; Parameter identifiability.
\end{abstract}

\section{Introduction}
\label{sec_introduction}

Hepatitis B virus (HBV) \cite{who2024global,world2024guidelines,worldint,world2026global} infection remains a major global public health concern because of its potential to cause both acute and chronic disease and to progress to severe liver-related complications, including cirrhosis and hepatocellular carcinoma. Despite the availability of effective vaccination and antiviral treatment, HBV continues to persist in many populations, particularly in settings where vaccination coverage, timely diagnosis, and access to treatment remain heterogeneous. The persistence of HBV is governed by a combination of epidemiological and demographic processes, including transmission, population recruitment, vaccination, treatment, and disease-associated removal. A quantitative understanding of the interaction among these mechanisms is therefore important for identifying the conditions under which HBV can be controlled or persist within a population.

Mathematical models provide a systematic framework for investigating infectious disease dynamics and for identifying threshold conditions associated with disease invasion and persistence. Compartmental models \cite{anderson1991infectious,martcheva2015introduction,hethcote2000mathematics} are particularly useful because they represent the movement of individuals between epidemiological states and allow the effects of prevention and treatment mechanisms to be incorporated explicitly. A central quantity in epidemic modeling is the basic reproduction number $\mathcal{R}_0$ \cite{van2002reproduction,diekmann2010construction,diekmann1990definition}, which represents the expected number of secondary infections generated by a typical infectious individual introduced into an otherwise disease-free population. In the classical threshold framework, $\mathcal{R}_0<1$ is associated with local disease elimination, whereas $\mathcal{R}_0>1$ permits invasion and persistence.

For HBV, vaccination and treatment act through complementary mechanisms  \cite{al2024global}. Vaccination reduces the number of susceptible individuals, whereas treatment reduces the infectious contribution of infected individuals and can consequently alter the conditions required for continued transmission. The combined effects of these interventions can therefore generate nontrivial threshold behavior. In particular, the epidemic threshold need not depend on transmission intensity alone; it may also be determined by interactions between transmission, vaccination, treatment, and other demographic or disease-related parameters \cite{easterbrook20242024}.

Although several mathematical models have been developed to investigate HBV transmission and the effects of vaccination and treatment, many analyses focus primarily on the value of $\mathcal{R}_0$ at a fixed parameter set or examine individual parameters independently \cite{mann2011modelling,g_i__marchuk__1991,ranchov_gk_1996,zaman2008stability,GUMEL2003409,zou2010modeling,seeger2000hepatitis,thornley2008hepatitis}. Such approaches may provide useful threshold information but do not fully characterize the geometry of the parameter space near the epidemic boundary. In particular, a threshold condition such as $\mathcal{R}_0=1$ can be viewed not only as a scalar criterion but also as a boundary separating qualitatively different dynamical regimes in a multidimensional parameter space. Furthermore, the local behavior of equilibria near this threshold provides important information about how disease-free and endemic states exchange stability. A rigorous bifurcation analysis is therefore valuable for determining the nature of the transition between disease elimination and endemic persistence.

Motivated by these considerations, we formulate a vaccination-treatment model for HBV transmission and investigate its threshold and bifurcation structure. The model incorporates susceptible, vaccinated, infected, and treated populations, together with demographic recruitment and disease-induced removal. We establish the positivity and boundedness of solutions, characterize the disease-free and endemic equilibria, and derive the basic reproduction number using the next-generation matrix approach. We then examine the local stability \cite{hurwitz1964conditions,castillo2002mathematical,CastilloChavezSong2004,patil2021routh} properties of the equilibria and investigate the threshold condition $\mathcal{R}_0=1$ analytically through center-manifold theory \cite{wang2004bifurcations,wang2006backward}. In particular, we establish the occurrence of a forward transcritical bifurcation with respect to the transmission parameter, thereby providing a rigorous characterization of the local transition between disease-free and endemic states.

Beyond the one-parameter threshold condition, we investigate the interaction between transmission and vaccination through a two-parameter analysis in the $(\beta,\nu_1)$ parameter plane. The resulting curve $\mathcal{R}_0(\beta,\nu_1)=1$ provides a geometric boundary separating sub-threshold and super-threshold regimes. The corresponding endemic-equilibrium surface is used to examine how the magnitude of persistent infection varies across the same parameter space. Local normalized sensitivity indices are further employed to quantify the influence of model parameters on $\mathcal{R}_0$, while a global sensitivity analysis based on Latin hypercube sampling and partial rank correlation coefficients (PRCCs) \cite{qian2020sensitivity,gomero2012latin} is used to assess the effects of parameter uncertainty on both the reproduction number and endemic infection level.

An additional objective of this study is to connect the analytical model with epidemiological observations. Accordingly, the model is parameterized using India-specific demographic and epidemiological information, including HBsAg prevalence, HBV incidence, and mortality estimates \cite{world2020hepatitis}. A constrained nonlinear calibration procedure is employed to identify parameter combinations capable of reproducing the prescribed epidemiological targets. The calibrated model is subsequently evaluated through direct model-data comparison, while multi-start optimization is used to examine the robustness of the calibration with respect to the initial parameter guesses. Because an accurate fit to aggregate epidemiological observations does not necessarily imply unique determination of the underlying mechanistic parameters, a profile-based practical identifiability analysis is also performed. Finally, the robustness of the inferred basic reproduction number is examined over the practically compatible parameter space to determine whether the threshold classification associated with the reference calibration is preserved under parameter uncertainty.

The main objectives of this study are therefore to (i) formulate a vaccination-treatment model for HBV transmission and establish its positivity, boundedness, equilibrium structure, and basic reproduction number; (ii) characterize the local stability of the disease-free and endemic equilibria and establish the direction of the bifurcation occurring at the epidemic threshold; (iii) investigate the interaction between transmission and vaccination through a two-parameter threshold geometry and quantify the sensitivity of $\mathcal{R}_0$ and the endemic infection level to key parameters; (iv) calibrate the model using India-specific epidemiological targets and evaluate the resulting model-data agreement; and (v) assess calibration robustness, practical parameter identifiability, and the robustness of $\mathcal{R}_0$ over the parameter region compatible with the available epidemiological observations.

The remainder of the paper is organized as follows. Section \ref{sec_model} presents the model formulation and underlying assumptions. Section \ref{sec_properties} establishes the basic mathematical properties of the model, such as the existence, positivity, and boundedness; characterizes the disease-free and endemic equilibria; and derives the basic reproduction number $\mathcal{R}_0$. Section \ref{sec_Stability} investigates the local stability of the disease-free equilibrium and the endemic equilibrium. Section \ref{sec_bifurcation} presents the analytical forward transcritical bifurcation analysis. Section \ref{sec_two_parameter} develops the two-parameter bifurcation analysis in the $(\beta,\nu_1)$-plane, and Section \ref{sec_threshold} establishes the corresponding analytical threshold structure. Section \ref{sec_numerical} presents the numerical results, including equilibrium and bifurcation diagrams and sensitivity analyses. Section \ref{sec_data_parameterization} develops the data-driven parameterization and validation of the model, including calibration, model-data comparison, multi-start calibration, practical identifiability, and robustness of the basic reproduction number. Section \ref{sec_discussion} discusses the epidemiological implications and limitations of the results, while Section \ref{sec_conclusion} concludes the study.

\section{Model formulation}
\label{sec_model}

\subsection{Compartmental structure}
\label{subsec_compartments}

Let the total population at time $t$ be
\begin{equation}
N(t)=S(t)+I(t)+R(t).
\end{equation}
The susceptible class $S(t)$ contains individuals who are susceptible to HBV infection. Individuals enter this class through recruitment, may acquire infection through horizontal transmission, and may leave it through vaccination or natural mortality. The infected class $I(t)$ consists of individuals infected with HBV. Infected individuals may receive treatment and subsequently enter the recovered class. They may also experience natural mortality or disease-induced mortality. The recovered class $R(t)$ contains individuals who have acquired immunity through vaccination or treatment. Since immunity is assumed to be permanent, recovered individuals do not return to the susceptible compartment.

The principal transitions are
\begin{equation}
S \xrightarrow{\beta SI} I, \qquad S \xrightarrow{\nu_1} R, \qquad I \xrightarrow{\nu_2} R.
\end{equation}
Natural mortality occurs in all three compartments, whereas disease-induced mortality occurs only in the infected class.

\subsection{Model assumptions}
\label{subsec_assumptions}

The model is based on the following assumptions:
\begin{enumerate}
\item[(i)] the population is homogeneously mixed;
\item[(ii)] individuals enter the population at a constant rate $\lambda$;
\item[(iii)] HBV is transmitted horizontally through the bilinear incidence $\beta SI$;
\item[(iv)] susceptible individuals are vaccinated at rate $\nu_1$;
\item[(v)] infected individuals receive treatment at rate $\nu_2$;
\item[(vi)] infected individuals experience disease-induced mortality at rate $\sigma$;
\item[(vii)] natural mortality occurs at rate $\mu$ in all compartments;
\item[(viii)] vaccination and treatment result in movement into the recovered class;
\item[(ix)] immunity is permanent;
\item[(x)] vertical transmission is neglected; and
\item[(xi)] the transmission coefficient $\beta$ is constant.
\end{enumerate}

Under these assumptions, the mathematical model is given by
\begin{equation}
\begin{aligned}
\frac{dS}{dt} &= \lambda-\beta SI-(\nu_1+\mu)S,\\
\frac{dI}{dt} &= \beta SI-(\sigma+\nu_2+\mu)I,\\
\frac{dR}{dt} &= \nu_1S+\nu_2I-\mu R.
\end{aligned}
\label{eq:model}
\end{equation}

Here, $S(t)$, $I(t)$, and $R(t)$ denote the susceptible, infected, and recovered populations, respectively. The parameter $\lambda$ is the recruitment rate, $\beta$ is the horizontal transmission coefficient, $\nu_1$ is the vaccination rate, $\nu_2$ is the treatment rate, $\sigma$ is the disease-induced mortality rate, and $\mu$ is the natural mortality rate.

\section{Basic mathematical properties and equilibrium analysis}
\label{sec_properties}

For convenience, define
\begin{equation}
A=\nu_1+\mu,
\qquad
Q=\sigma+\nu_2+\mu.
\label{eq:AQ}
\end{equation}
Then system~\eqref{eq:model} can be written as
\begin{equation}
\begin{aligned}
\dot S &= \lambda-\beta SI-AS,\\
\dot I &= \beta SI-QI,\\
\dot R &= \nu_1S+\nu_2I-\mu R.
\end{aligned}
\label{eq:compact_model}
\end{equation}

\subsection{Existence, positivity and boundedness}
\label{subsec_existence}

Let
\begin{equation}
X(t)=(S(t),I(t),R(t))^{T}.
\end{equation}
Then system~\eqref{eq:compact_model} can be expressed as
\begin{equation}
\dot X=F(X),
\end{equation}
where $F:\mathbb{R}^{3}\rightarrow\mathbb{R}^{3}$ is continuously differentiable. Hence, $F$ is locally Lipschitz, and for every nonnegative initial condition
\begin{equation}
S(0)=S_0\geq0,\qquad I(0)=I_0\geq0,\qquad R(0)=R_0\geq0,
\end{equation}
system~\eqref{eq:compact_model} admits a unique local solution.

We next establish positivity. On the boundary $S=0$,
\begin{equation}
\left.\frac{dS}{dt}\right|_{S=0}=\lambda>0.
\end{equation}
On the boundary $I=0$,
\begin{equation}
\left.\frac{dI}{dt}\right|_{I=0}=0,
\end{equation}
while on the boundary $R=0$,
\begin{equation}
\left.\frac{dR}{dt}\right|_{R=0}=\nu_1S+\nu_2I\geq0.
\end{equation}
Therefore, the vector field is inward-pointing or tangent on each coordinate boundary of $\mathbb{R}^{3}_{+}$. Consequently, the nonnegative orthant is positively invariant and
\begin{equation}
S(t)\geq0,\qquad I(t)\geq0,\qquad R(t)\geq0,
\qquad t\geq0.
\end{equation}

To establish boundedness, let
\begin{equation}
N(t)=S(t)+I(t)+R(t).
\end{equation}
Adding the equations in~\eqref{eq:compact_model} gives
\begin{equation}
\frac{dN}{dt}=\lambda-\mu N-\sigma I
\leq\lambda-\mu N.
\end{equation}
By the comparison theorem,
\begin{equation}
N(t)\leq N(0)e^{-\mu t}
+\frac{\lambda}{\mu}\left(1-e^{-\mu t}\right),
\end{equation}
and therefore
\begin{equation}
N(t)\leq\max\left\{N(0),\frac{\lambda}{\mu}\right\}.
\end{equation}
Thus, every nonnegative solution is uniformly bounded and the local solution extends globally.

\subsection{Biologically feasible region}
\label{subsec:feasible}

The positivity and boundedness results imply that the region
\begin{equation}
\Omega=
\left\{
(S,I,R)\in\mathbb{R}^{3}_{+}:
S+I+R\leq\frac{\lambda}{\mu}
\right\}
\label{eq:Omega}
\end{equation}
is positively invariant whenever
\begin{equation}
N(0)\leq\frac{\lambda}{\mu}.
\end{equation}
Hence, $\Omega$ is a biologically feasible region for system~\eqref{eq:compact_model}.

\subsection{Disease-free equilibrium}
\label{subsec_dfe}

At the disease-free equilibrium, $I=0$. Therefore,
\begin{equation}
0=\lambda-AS,
\end{equation}
which gives
\begin{equation}
S_0=\frac{\lambda}{A}.
\end{equation}
The third equilibrium equation yields
\begin{equation}
R_0=\frac{\nu_1S_0}{\mu}
=\frac{\lambda\nu_1}{\mu A}.
\end{equation}
Hence, the disease-free equilibrium is
\begin{equation}
E_0=
\left(
\frac{\lambda}{\nu_1+\mu},
0,
\frac{\lambda\nu_1}{\mu(\nu_1+\mu)}
\right).
\label{eq:dfe}
\end{equation}

\subsection{Basic reproduction number}
\label{subsec_R0}

The infected equation in~\eqref{eq:compact_model} can be written as
\begin{equation}
\dot I=\beta SI-QI.
\end{equation}
At the disease-free equilibrium,
\begin{equation}
S=S_0=\frac{\lambda}{A}.
\end{equation}
The new-infection and transition terms are therefore
\begin{equation}
\mathcal{F}=\beta S_0 I,
\qquad
\mathcal{V}=QI.
\end{equation}
Using the next-generation matrix approach, the basic reproduction number is
\begin{equation}
\mathcal{R}_0=\frac{\beta S_0}{Q},
\end{equation}
and hence
\begin{equation}
\boxed{
\mathcal{R}_0=
\frac{\beta\lambda}
{(\nu_1+\mu)(\sigma+\nu_2+\mu)}
}.
\label{eq:R0}
\end{equation}
Thus, $\mathcal{R}_0$ provides the fundamental threshold quantity governing HBV invasion and persistence.

\subsection{Endemic equilibrium}
\label{subsec_endemic}

Let
\begin{equation}
E^*=(S^*,I^*,R^*)
\end{equation}
denote an endemic equilibrium with $I^*>0$. From the infected equilibrium equation,
\begin{equation}
0=I^*(\beta S^*-Q).
\end{equation}
Since $I^*>0$,
\begin{equation}
S^*=\frac{Q}{\beta}
=\frac{\sigma+\nu_2+\mu}{\beta}.
\label{eq:Sstar}
\end{equation}
Substitution into the susceptible equilibrium equation gives
\begin{equation}
I^*
=\frac{\lambda}{Q}-\frac{A}{\beta}
=\frac{A}{\beta}(\mathcal{R}_0-1).
\label{eq:Istar}
\end{equation}
Finally,
\begin{equation}
R^*
=\frac{\nu_1S^*+\nu_2I^*}{\mu},
\end{equation}
so that
\begin{equation}
R^*
=
\frac{\nu_1Q+\nu_2A(\mathcal{R}_0-1)}
{\mu\beta}.
\label{eq:Rstar}
\end{equation}
Therefore,
\begin{equation}
E^*=
\left(
\frac{Q}{\beta},
\frac{A}{\beta}(\mathcal{R}_0-1),
\frac{\nu_1Q+\nu_2A(\mathcal{R}_0-1)}
{\mu\beta}
\right).
\label{eq:endemic}
\end{equation}

Since $A>0$ and $\beta>0$,
\begin{equation}
I^*>0
\quad\Longleftrightarrow\quad
\mathcal{R}_0>1.
\end{equation}
Hence, the endemic equilibrium exists in the biologically feasible region if and only if $\mathcal{R}_0>1$. At $\mathcal{R}_0=1$, we have $I^*=0$, and the endemic and disease-free equilibria coincide.

\section{Local stability analysis}
\label{sec_Stability}

The local stability of the disease-free and endemic equilibria is determined from the Jacobian matrix of system~\eqref{eq:compact_model}. The Jacobian matrix is
\begin{equation}
J(S,I,R)=\begin{pmatrix}
-\beta I-A & -\beta S & 0\\
\beta I & \beta S-Q & 0\\
\nu_1 & \nu_2 & -\mu
\end{pmatrix},
\label{eq:jacobian}
\end{equation}

\subsection{Local stability of the disease-free equilibrium}
\label{subsec:dfe_stability}

At the disease-free equilibrium
\begin{equation}
E_0=\left(\frac{\lambda}{A},0,\frac{\lambda\nu_1}{\mu A}\right),
\end{equation}
the Jacobian~\eqref{eq:jacobian} becomes
\begin{equation}
J(E_0)=
\begin{pmatrix}
-A & -\dfrac{\beta\lambda}{A} & 0\\
0 & \dfrac{\beta\lambda}{A}-Q & 0\\
\nu_1 & \nu_2 & -\mu
\end{pmatrix}.
\end{equation}
Since
\begin{equation}
\frac{\beta\lambda}{A}=Q\mathcal{R}_0,
\end{equation}
the eigenvalues of $J(E_0)$ are
\begin{equation}
\lambda_1=-A,\qquad
\lambda_2=Q(\mathcal{R}_0-1),\qquad
\lambda_3=-\mu.
\end{equation}
Because $A>0$, $Q>0$, and $\mu>0$, it follows that all eigenvalues have negative real parts when $\mathcal{R}_0<1$, whereas $\lambda_2>0$ when $\mathcal{R}_0>1$. Therefore, we obtain the following result.

\begin{theorem}
The disease-free equilibrium $E_0$ is locally asymptotically stable for $\mathcal{R}_0<1$ and unstable for $\mathcal{R}_0>1$.
\end{theorem}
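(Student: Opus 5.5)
The plan is to apply the linearization principle (Hartman--Grobman, or the standard linearized stability theorem) to system~\eqref{eq:compact_model} at $E_0$. I will use the Jacobian~\eqref{eq:jacobian} already evaluated at $E_0$. The vector field $F$ is $C^1$ (indeed polynomial), so local asymptotic stability follows once every eigenvalue of $J(E_0)$ has negative real part. Likewise, instability follows once some eigenvalue has positive real part.

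First, I will compute the spectrum exactly. The third column of $J(E_0)$ is $(0,0,-\mu)^T$, so expanding the characteristic polynomial along that column factors out $(\xi+\mu)$. The remaining $2\times 2$ block in the $(S,I)$ variables is upper triangular, with diagonal entries $-A$ and $\beta\lambda/A-Q$. Hence
\begin{equation*}
\det\bigl(\xi I_3-J(E_0)\bigr)=(\xi+\mu)(\xi+A)\bigl(\xi-Q(\mathcal{R}_0-1)\bigr).
\end{equation*}
This uses the identity $\beta\lambda/A=Q\mathcal{R}_0$, which follows directly from \eqref{eq:R0}. The eigenvalues are therefore the three real numbers $-\mu$, $-A$ and $Q(\mathcal{R}_0-1)$.

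Second, I will do the sign analysis. All parameters are positive, so $A=\nu_1+\mu>0$, $Q=\sigma+\nu_2+\mu>0$ and $\mu>0$. If $\mathcal{R}_0<1$, all three eigenvalues are strictly negative, so $E_0$ is hyperbolic and locally asymptotically stable; in fact it is exponentially attracting nearby. If $\mathcal{R}_0>1$, the eigenvalue $Q(\mathcal{R}_0-1)$ is strictly positive, so $E_0$ is a hyperbolic saddle and is unstable.

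For $\mathcal{R}_0>1$ there is one subtlety I would address. $E_0$ lies on the boundary face $I=0$ of $\Omega$, so I want instability relative to the biologically relevant region, not merely in $\mathbb{R}^3$. The unstable eigenvector can be taken as $v=(-\beta S_0,\,Q\mathcal{R}_0-Q+A,\,\ast)$, whose $I$-component is positive. The unstable manifold is therefore tangent to a direction pointing into $I>0$, and trajectories starting arbitrarily close to $E_0$ with small $I_0>0$ leave a neighbourhood of $E_0$. One can also check this directly: near $E_0$ we have $\dot I=(\beta S-Q)I\ge \tfrac12 Q(\mathcal{R}_0-1)I$, so $I$ grows until the trajectory exits the neighbourhood. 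This is the only step needing care; the rest is routine. The borderline case $\mathcal{R}_0=1$ (a zero eigenvalue) is excluded by the statement and is handled by the center-manifold analysis of Section~\ref{sec_bifurcation}.
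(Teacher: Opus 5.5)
Your proposal is correct and follows the paper's proof. Both compute the eigenvalues $-A$, $Q(\mathcal{R}_0-1)$ and $-\mu$ of $J(E_0)$ using $\beta\lambda/A=Q\mathcal{R}_0$, then conclude by sign analysis. Your extra check is a valid refinement that the paper omits: the unstable eigenvector has positive $I$-component, or equivalently $\dot I\ge\tfrac12 Q(\mathcal{R}_0-1)I$ near $E_0$, so the instability is realized by trajectories inside the feasible region with $I_0>0$.
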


\begin{proof}
The result follows directly from the eigenvalues of the Jacobian evaluated at $E_0$. For $\mathcal{R}_0<1$, all three eigenvalues are negative, while for $\mathcal{R}_0>1$, the eigenvalue $Q(\mathcal{R}_0-1)$ is positive.
\end{proof}

\subsection{Local stability of the endemic equilibrium}
\label{subsec_endemic_stability}

Consider the endemic equilibrium $E^*$, which exists for $\mathcal{R}_0>1$. From~\eqref{eq:Sstar} and~\eqref{eq:Istar}, we have
\begin{equation}
\beta S^*=Q,\qquad \beta I^*=A(\mathcal{R}_0-1).
\end{equation}
Define
\begin{equation}
X=A(\mathcal{R}_0-1),
\end{equation}
so that $X>0$ whenever $\mathcal{R}_0>1$. The Jacobian at $E^*$ is then
\begin{equation}
J(E^*)=
\begin{pmatrix}
-(A+X) & -Q & 0\\
X & 0 & 0\\
\nu_1 & \nu_2 & -\mu
\end{pmatrix}.
\end{equation}
The characteristic equation is
\begin{equation}
\det(\xi I-J(E^*))=(\xi+\mu)\left[\xi^2+(A+X)\xi+QX\right].
\end{equation}
Since $A+X=A\mathcal{R}_0$ and $X=A(\mathcal{R}_0-1)$, this becomes
\begin{equation}
P(\xi)=(\xi+\mu)\left[\xi^2+A\mathcal{R}_0\xi+AQ(\mathcal{R}_0-1)\right].
\end{equation}
Expanding the characteristic polynomial gives
\begin{equation}
P(\xi)=\xi^3+a_1\xi^2+a_2\xi+a_3,
\end{equation}
where
\begin{equation}
a_1=A\mathcal{R}_0+\mu,
\end{equation}
\begin{equation}
a_2=A\left[Q(\mathcal{R}_0-1)+\mu\mathcal{R}_0\right],
\end{equation}
and
\begin{equation}
a_3=\mu AQ(\mathcal{R}_0-1).
\end{equation}
For $\mathcal{R}_0>1$, all three coefficients are positive. Moreover,
\begin{equation}
a_1a_2-a_3=QA^2\mathcal{R}_0(\mathcal{R}_0-1)+\mu A\mathcal{R}_0(A\mathcal{R}_0+\mu)>0.
\end{equation}
Hence, all Routh-Hurwitz conditions for a cubic polynomial are satisfied.

\begin{theorem}
For $\mathcal{R}_0>1$, the endemic equilibrium $E^*$ is locally asymptotically stable.
\end{theorem}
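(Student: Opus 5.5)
The plan is to linearize at $E^*$ and show that every root of the characteristic polynomial has negative real part. The polynomial is already computed above, so no new Jacobian work is needed. I will use two routes that reinforce each other: an explicit factorization, and the Routh--Hurwitz criterion.

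\textbf{Factorization.} The characteristic polynomial factors as
\begin{equation*}
P(\xi)=(\xi+\mu)\left[\xi^2+A\mathcal{R}_0\xi+AQ(\mathcal{R}_0-1)\right].
\end{equation*}
This follows because the third column of $J(E^*)$ is $(0,0,-\mu)^T$: expanding the determinant along that column leaves $(\xi+\mu)$ times the characteristic polynomial of the upper-left $2\times 2$ block.

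\textbf{Eigenvalues.} One eigenvalue is $-\mu<0$. The other two are the roots of the quadratic $\xi^2+p\xi+q$, with $p=A\mathcal{R}_0$ and $q=AQ(\mathcal{R}_0-1)$. Since $A,Q>0$ and $\mathcal{R}_0>1$, both $p>0$ and $q>0$. For a real quadratic, $p>0$ and $q>0$ together are exactly the condition for both roots to have negative real part. If the roots are real, their product $q$ is positive and their sum $-p$ is negative, so both are negative. If the roots are complex conjugates, their common real part is $-p/2<0$.

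\textbf{Routh--Hurwitz check.} As an independent confirmation, take the expanded cubic $\xi^3+a_1\xi^2+a_2\xi+a_3$. The coefficients $a_1,a_2,a_3$ are positive when $\mathcal{R}_0>1$. The identity $a_1a_2-a_3=QA^2\mathcal{R}_0(\mathcal{R}_0-1)+\mu A\mathcal{R}_0(A\mathcal{R}_0+\mu)$ shows that $a_1a_2>a_3$. By the Routh--Hurwitz criterion for cubics, all roots then lie in the open left half-plane.

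The only step needing care is verifying that identity. Expanding $a_1a_2$ gives
\begin{equation*}
a_1a_2=A^2Q\mathcal{R}_0(\mathcal{R}_0-1)+\mu A^2\mathcal{R}_0^2+\mu AQ(\mathcal{R}_0-1)+\mu^2A\mathcal{R}_0 .
\end{equation*}
Subtracting $a_3=\mu AQ(\mathcal{R}_0-1)$ cancels the third term exactly. The remaining three terms are all positive for $\mathcal{R}_0>1$.

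\textbf{Conclusion.} All eigenvalues of $J(E^*)$ have negative real parts. The linearization theorem (Hartman--Grobman) then gives local asymptotic stability of $E^*$ whenever it exists, that is, whenever $\mathcal{R}_0>1$.
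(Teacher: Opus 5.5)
Your proof is correct and follows the paper's route: the paper also factors the characteristic polynomial as $(\xi+\mu)[\xi^2+A\mathcal{R}_0\xi+AQ(\mathcal{R}_0-1)]$, and it then concludes via the Routh--Hurwitz conditions $a_1,a_2,a_3>0$ and $a_1a_2-a_3>0$, using exactly the identity you verified. Your direct sign analysis of the quadratic factor is a slightly more elementary finish, and on its own it already makes the Routh--Hurwitz check redundant.
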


\begin{proof}
For $\mathcal{R}_0>1$, we have $a_1>0$, $a_2>0$, and $a_3>0$, while $a_1a_2-a_3>0$. Therefore, the Routh-Hurwitz criterion implies that all eigenvalues of $J(E^*)$ have negative real parts. Hence, the endemic equilibrium $E^*$ is locally asymptotically stable.
\end{proof}

The local stability results can therefore be summarized by the threshold relation
\begin{equation}
\boxed{
\begin{aligned}
\mathcal{R}_0<1 &\quad\Longrightarrow\quad E_0\ \text{is locally asymptotically stable},\\
\mathcal{R}_0=1 &\quad\Longrightarrow\quad E_0=E^*,\\
\mathcal{R}_0>1 &\quad\Longrightarrow\quad E^*\ \text{exists and is locally asymptotically stable}.
\end{aligned}}
\label{eq:stability_summary}
\end{equation}
This threshold structure provides the basis for the bifurcation analysis in the next section.

\section{Forward transcritical bifurcation at \texorpdfstring{$\mathcal{R}_0=1$}{R0=1}}
\label{sec_bifurcation}

The stability results established in the previous section show that the disease-free equilibrium changes stability as the basic reproduction number crosses unity. Moreover, the disease-free and endemic equilibria coincide when $\mathcal{R}_0=1$. This indicates the possibility of a transcritical bifurcation at the epidemic threshold. To determine the direction of the bifurcation rigorously, we employ the center-manifold theorem with the transmission rate $\beta$ as the bifurcation parameter.

From the expression for the basic reproduction number,
\begin{equation}
\mathcal{R}_0=\frac{\beta\lambda}{AQ},
\end{equation}
the critical transmission rate is obtained by imposing $\mathcal{R}_0=1$. Hence,
\begin{equation}
\beta_c=\frac{AQ}{\lambda}=\frac{(\nu_1+\mu)(\sigma+\nu_2+\mu)}{\lambda}.
\label{eq:beta_critical}
\end{equation}

At $\beta=\beta_c$, the Jacobian evaluated at the disease-free equilibrium is
\begin{equation}
J_c=
\begin{pmatrix}
-A & -Q & 0\\
0 & 0 & 0\\
\nu_1 & \nu_2 & -\mu
\end{pmatrix}.
\label{eq:critical_jacobian}
\end{equation}
The eigenvalues of $J_c$ are
\begin{equation}
\lambda_1=-A,\qquad \lambda_2=0,\qquad \lambda_3=-\mu.
\end{equation}
Since $A>0$ and $\mu>0$, the critical Jacobian has exactly one simple zero eigenvalue, while the remaining two eigenvalues have negative real parts. Thus, the center subspace is one-dimensional, which is the appropriate setting for the application of the center-manifold theorem.

\subsection{Center-manifold coefficients}
\label{subsec_center_manifold}

Let $v$ and $w$ denote the right and left eigenvectors associated with the zero eigenvalue, respectively. The right eigenvector satisfies
\begin{equation}
J_cv=0.
\end{equation}
Choosing $v_2=1$, we obtain
\begin{equation}
v=
\begin{pmatrix}
-\dfrac{Q}{A}\\[4pt]
1\\[4pt]
\dfrac{\nu_2A-\nu_1Q}{\mu A}
\end{pmatrix}.
\label{eq:right_eigenvector}
\end{equation}
The left eigenvector satisfies
\begin{equation}
J_c^{T}w=0.
\end{equation}
A convenient normalized choice is
\begin{equation}
w=
\begin{pmatrix}
0\\
1\\
0
\end{pmatrix},
\qquad
w^{T}v=1.
\label{eq:left_eigenvector}
\end{equation}

The nonlinear component responsible for the bifurcation is the bilinear incidence term $\beta SI$. Consequently, the only second-order derivatives required in the center-manifold calculation are
\begin{equation}
\frac{\partial^2 f_2}{\partial S\,\partial I}
=
\frac{\partial^2 f_2}{\partial I\,\partial S}
=\beta.
\end{equation}
At the bifurcation point, $\beta=\beta_c$. Following the center-manifold formulation of Castillo--Chavez and Song, the first bifurcation coefficient is
\begin{equation}
a=
\frac{1}{2}
\sum_{k,i,j}
w_kv_iv_j
\frac{\partial^2 f_k}{\partial x_i\partial x_j}
(E_0,\beta_c).
\end{equation}
Because the second component is the only component contributing to this expression and $w_2=1$, this reduces to
\begin{equation}
a=\beta_cv_1v_2.
\end{equation}
Using $v_1=-Q/A$ and $v_2=1$, we obtain
\begin{equation}
a=-\frac{\beta_cQ}{A}.
\end{equation}
Substitution of $\beta_c=AQ/\lambda$ gives
\begin{equation}
a=-\frac{Q^2}{\lambda}
=-\frac{(\sigma+\nu_2+\mu)^2}{\lambda}<0.
\label{eq:bifurcation_a}
\end{equation}

The second bifurcation coefficient is
\begin{equation}
b=
\sum_{k,i}
w_kv_i
\frac{\partial^2 f_k}{\partial x_i\partial\beta}
(E_0,\beta_c).
\end{equation}
Since
\begin{equation}
\frac{\partial f_2}{\partial\beta}=SI,
\end{equation}
we have
\begin{equation}
\frac{\partial^2f_2}{\partial I\,\partial\beta}=S.
\end{equation}
At the disease-free equilibrium,
\begin{equation}
S_0=\frac{\lambda}{A},
\end{equation}
and therefore
\begin{equation}
b=\frac{\lambda}{A}
=\frac{\lambda}{\nu_1+\mu}>0.
\label{eq:bifurcation_b}
\end{equation}

Thus, the nondegeneracy conditions required for the center-manifold bifurcation theorem are satisfied:
\begin{equation}
a<0,\qquad b>0.
\end{equation}

\begin{theorem}
\label{thm:forward_bifurcation}
The disease-free equilibrium of the model undergoes a forward transcritical bifurcation at $\mathcal{R}_0=1$, equivalently at the critical transmission rate
\begin{equation}
\beta_c=\frac{(\nu_1+\mu)(\sigma+\nu_2+\mu)}{\lambda}.
\end{equation}
\end{theorem}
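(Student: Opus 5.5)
The plan is to apply the Castillo--Chavez--Song center-manifold theorem directly, using the coefficients already computed in Subsection~\ref{subsec_center_manifold}. First I would check that the hypotheses of that theorem hold at $(E_0,\beta_c)$. The vector field $F(X,\beta)$ is $C^2$ in $(X,\beta)$; indeed it is polynomial. The disease-free equilibrium $E_0$ is an equilibrium for every $\beta$. By~\eqref{eq:critical_jacobian}, $J_c$ has a simple zero eigenvalue, and its other eigenvalues $-A,-\mu$ are negative.

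Simplicity of the zero eigenvalue needs a little care. The right eigenvector $v$ in~\eqref{eq:right_eigenvector} spans $\ker J_c$, because $J_c$ has rank two. I would verify that $w=(0,1,0)^T$ really satisfies $J_c^Tw=0$: the second column of $J_c^T$ is the second row of $J_c$, which is zero. With the normalization $w^Tv=v_2=1$, the theorem's normalization holds.

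Next I would recompute $a$ and $b$ to confirm their signs. Only $f_2=\beta SI-QI$ is weighted by $w$, and its only nonzero second derivatives in the state variables are the mixed $S$--$I$ derivatives. Summing over both orderings $(i,j)=(1,2),(2,1)$ gives $a=\tfrac12\cdot 2\beta_c v_1v_2=-Q^2/\lambda<0$. For $b$, the derivative $\partial^2 f_2/\partial I\partial\beta=S$ evaluated at $E_0$, multiplied by $v_2=1$, gives $\lambda/A>0$. I would also note that $\partial^2 f_2/\partial S\partial\beta=I$ vanishes at $E_0$.

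By the theorem (case $a<0$, $b>0$), when $\beta$ passes from below $\beta_c$ to above it, $E_0$ changes from stable to unstable. At the same time a positive equilibrium appears for $0<\beta-\beta_c\ll1$ and is locally asymptotically stable. That is the forward transcritical bifurcation at $\mathcal{R}_0=1$.

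There is one obstacle. The theorem is usually stated for equilibria translated to the origin, and it concerns the direction of the bifurcating branch along $v$. Because $v_1<0$ and $v_3$ may be negative, the bifurcating branch $E_0+\epsilon v+O(\epsilon^2)$ has decreasing $S$. So I must check that it is biologically meaningful, meaning $I>0$ with the other coordinates positive. This holds near $E_0$ because $S_0>0$, $R_0>0$ and only $I_0=0$ lies on the boundary. The remark in Castillo--Chavez--Song about nonnegativity when $v_j<0$ only at positive equilibrium components covers this.

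Finally, I would cross-check against the explicit formulas~\eqref{eq:endemic}, where $I^*=\tfrac{A}{\beta}(\mathcal{R}_0-1)$ grows linearly in $\beta-\beta_c$. Together with the stability result for $E^*$, this confirms the forward direction independently.
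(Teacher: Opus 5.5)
Your proposal is correct and takes essentially the same route as the paper. You confirm that $J_c$ has a simple zero eigenvalue with the remaining eigenvalues $-A,-\mu$ negative, then invoke the Castillo--Chavez--Song theorem with $a=-Q^2/\lambda<0$ and $b=\lambda/A>0$. Your extra checks make explicit what the paper leaves implicit or treats later in its normal-form and threshold-structure discussion: that $w=(0,1,0)^T$ solves $J_c^Tw=0$; that $\partial^2 f_2/\partial S\,\partial\beta=I$ vanishes at $E_0$; that the bifurcating branch is admissible because $I$ is the only zero coordinate of $E_0$ and $v_2=1>0$; and the cross-check with $I^*=\frac{A}{\beta}(\mathcal{R}_0-1)$.
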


\begin{proof}
At $\beta=\beta_c$, the Jacobian evaluated at the disease-free equilibrium possesses exactly one simple zero eigenvalue, while its remaining eigenvalues are negative. Furthermore, the center-manifold coefficients satisfy $a<0$ and $b>0$. Hence, the nondegeneracy and transversality conditions of the center-manifold bifurcation theorem are fulfilled, and the model undergoes a forward transcritical bifurcation at $\mathcal{R}_0=1$.
\end{proof}

\subsection{Local normal form and direction of bifurcation}
\label{subsec_normal_form}

The local dynamics on the one-dimensional center manifold can be represented by the normal form
\begin{equation}
\dot z
=
az^2+b(\beta-\beta_c)z
+
O\left(|z|^3+|\beta-\beta_c|z^2\right).
\label{eq:normal_form_general}
\end{equation}
Using the coefficients obtained above, this becomes
\begin{equation}
\dot z
=
-\frac{Q^2}{\lambda}z^2
+
\frac{\lambda}{A}(\beta-\beta_c)z
+
O\left(|z|^3+|\beta-\beta_c|z^2\right).
\label{eq:normal_form}
\end{equation}
The nonzero equilibrium branch of the reduced system satisfies
\begin{equation}
z^*=-\frac{b}{a}(\beta-\beta_c).
\end{equation}
Since
\begin{equation}
-\frac{b}{a}>0,
\end{equation}
the biologically relevant nonzero branch exists for
\begin{equation}
\beta>\beta_c,
\end{equation}
which is equivalent to
\begin{equation}
\mathcal{R}_0>1.
\end{equation}
Consequently, the endemic equilibrium emerges from the disease-free equilibrium on the supercritical side of the threshold. This confirms that the bifurcation is forward transcritical rather than backward, and establishes analytically that endemic infection becomes feasible only after the transmission parameter crosses the critical value $\beta_c$.

\section{Two-parameter threshold geometry}
\label{sec_two_parameter}
The threshold analysis is extended to the two-parameter plane $(\beta,\nu_1)$, where $\beta$ denotes the transmission rate and $\nu_1$ represents the vaccination rate.

From the expression for the basic reproduction number,
\begin{equation}
\mathcal{R}_0(\beta,\nu_1)=\frac{\beta\lambda}{(\nu_1+\mu)(\sigma+\nu_2+\mu)},
\label{eq_R0_two_parameter}
\end{equation}
the epidemic threshold is determined by $\mathcal{R}_0(\beta,\nu_1)=1$. Solving this condition for $\beta$ gives the critical transmission threshold
\begin{equation}
\boxed{
\beta_c(\nu_1)=
\frac{(\nu_1+\mu)(\sigma+\nu_2+\mu)}{\lambda}.
}
\label{eq:beta_c_nu1}
\end{equation}
Thus, unlike the one-parameter setting in which $\beta_c$ is a single critical value, the threshold becomes a curve in the $(\beta,\nu_1)$ parameter plane.

Since $\beta_c(\nu_1)$ is linear in $\nu_1$, its derivative is
\begin{equation}
\frac{d\beta_c}{d\nu_1}
=
\frac{\sigma+\nu_2+\mu}{\lambda}>0.
\label{eq:threshold_slope}
\end{equation}
Therefore, increasing the vaccination rate shifts the critical transmission threshold toward larger values. Equivalently, a higher level of vaccination allows the population to tolerate a larger transmission rate before the epidemic threshold is crossed.

The same threshold relation can be solved for the critical vaccination rate. For a prescribed transmission rate $\beta$, we obtain
\begin{equation}
\nu_{1,c}(\beta)
=
\frac{\beta\lambda}{\sigma+\nu_2+\mu}-\mu.
\label{eq:nu1_critical}
\end{equation}
Consequently,
\begin{equation}
\nu_1>\nu_{1,c}(\beta)
\quad\Longrightarrow\quad
\mathcal{R}_0<1,
\end{equation}
whereas
\begin{equation}
\nu_1<\nu_{1,c}(\beta)
\quad\Longrightarrow\quad
\mathcal{R}_0>1.
\end{equation}
Hence, equation~\eqref{eq:nu1_critical} provides an explicit vaccination threshold required to maintain the system in the disease-free regime for a specified transmission intensity.

The threshold curve
\begin{equation}
\mathcal{B}
=
\left\{
(\beta,\nu_1):
\beta=
\frac{(\nu_1+\mu)(\sigma+\nu_2+\mu)}{\lambda}
\right\}
\label{eq:threshold_curve}
\end{equation}
divides the positive $(\beta,\nu_1)$ parameter plane into two epidemiologically distinct regions. Below the curve,
\begin{equation}
\beta<\beta_c(\nu_1)
\quad\Longleftrightarrow\quad
\mathcal{R}_0<1,
\end{equation}
and the disease-free equilibrium is locally asymptotically stable. Above the curve,
\begin{equation}
\beta>\beta_c(\nu_1)
\quad\Longleftrightarrow\quad
\mathcal{R}_0>1,
\end{equation}
and the endemic equilibrium exists and is locally asymptotically stable. Along $\mathcal{B}$, $\mathcal{R}_0=1$, the disease-free and endemic equilibria coincide, and the forward transcritical bifurcation occurs in the direction transverse to the threshold curve. Thus, the one-parameter bifurcation obtained by varying $\beta$ while fixing $\nu_1$ can be interpreted as a cross-section of the more general two-parameter threshold geometry.

The regularity of the threshold curve follows directly from the implicit function theorem. Define
\begin{equation}
G(\beta,\nu_1)=\mathcal{R}_0(\beta,\nu_1)-1
=
\frac{\beta\lambda}{(\nu_1+\mu)(\sigma+\nu_2+\mu)}-1.
\end{equation}
Its gradient is
\begin{equation}
\nabla G
=
\left(
\frac{\lambda}{(\nu_1+\mu)(\sigma+\nu_2+\mu)},
-\frac{\beta\lambda}{(\nu_1+\mu)^2(\sigma+\nu_2+\mu)}
\right).
\end{equation}
On the threshold curve $\mathcal{B}$, where $\beta\lambda=(\nu_1+\mu)(\sigma+\nu_2+\mu)$, this becomes
\begin{equation}
\nabla G\big|_{\mathcal{B}}
=
\left(
\frac{\lambda}{(\nu_1+\mu)(\sigma+\nu_2+\mu)},
-\frac{1}{\nu_1+\mu}
\right)\neq(0,0).
\end{equation}

Therefore, $\mathcal{B}$ is a regular smooth curve in the $(\beta,\nu_1)$ plane. The two-parameter analysis should consequently be interpreted as a one-dimensional transcritical bifurcation curve embedded in a two-dimensional parameter space, rather than as a codimension-two singular bifurcation.

The two-parameter threshold also provides a direct intervention interpretation. From
\begin{equation}
\beta\lambda=(\nu_1+\mu)(\sigma+\nu_2+\mu)
\label{eq:threshold_balance}
\end{equation}
we see that transmission is balanced by vaccination, treatment, and disease-induced removal. Moreover, the critical transmission threshold increases with the treatment rate and the disease-induced removal rate according to
\begin{equation}
\frac{\partial\beta_c}{\partial\nu_2}
=
\frac{\nu_1+\mu}{\lambda}>0,
\qquad
\frac{\partial\beta_c}{\partial\sigma}
=
\frac{\nu_1+\mu}{\lambda}>0.
\end{equation}
Thus, stronger treatment or greater disease-induced removal shifts the threshold toward larger transmission rates and makes epidemic persistence more difficult.

The two-parameter threshold analysis also determines how the endemic infection level varies within the endemic region. From the endemic equilibrium,
\begin{equation}
I^*
=
\frac{\lambda}{\sigma+\nu_2+\mu}
-
\frac{\nu_1+\mu}{\beta},
\label{eq:Istar_two_parameter}
\end{equation}
we obtain
\begin{equation}
\frac{\partial I^*}{\partial\beta}
=
\frac{\nu_1+\mu}{\beta^2}>0,
\label{eq:dI_dbeta}
\end{equation}
and
\begin{equation}
\frac{\partial I^*}{\partial\nu_1}
=
-\frac{1}{\beta}<0.
\label{eq:dI_dnu1}
\end{equation}
Hence, increasing the transmission rate increases the endemic infection level, whereas increasing vaccination decreases it. These relations show that vaccination has two complementary effects: it shifts the epidemic threshold toward larger transmission rates and simultaneously reduces the endemic infection burden once persistence has been established.

\begin{theorem}
\label{thm_two_parameter}
For fixed positive $\lambda$, $\mu$, $\sigma$, and $\nu_2$, the $(\beta,\nu_1)$ parameter plane is separated by the regular threshold curve
\begin{equation}
\mathcal{B}:\qquad
\beta=
\frac{(\nu_1+\mu)(\sigma+\nu_2+\mu)}{\lambda}.
\end{equation}
In the region below $\mathcal{B}$, $\mathcal{R}_0<1$ and the disease-free equilibrium is locally asymptotically stable. In the region above $\mathcal{B}$, $\mathcal{R}_0>1$, the endemic equilibrium exists and is locally asymptotically stable. Along $\mathcal{B}$, the disease-free and endemic equilibria coincide, and the system undergoes a forward transcritical bifurcation in the direction transverse to the threshold curve.
\end{theorem}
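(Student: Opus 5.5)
The plan is to reduce the theorem to the one-parameter results already proved, applied pointwise in $\nu_1$. First, fix $\lambda,\mu,\sigma,\nu_2>0$ and work in the open quadrant $\beta>0$, $\nu_1\ge 0$.

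Since $\mathcal{R}_0(\beta,\nu_1)=\beta\lambda/\bigl((\nu_1+\mu)(\sigma+\nu_2+\mu)\bigr)$ is strictly increasing in $\beta$ with positive denominator, the three relations $\mathcal{R}_0<1$, $\mathcal{R}_0=1$, $\mathcal{R}_0>1$ are equivalent to $\beta<\beta_c(\nu_1)$, $\beta=\beta_c(\nu_1)$, $\beta>\beta_c(\nu_1)$, respectively. This yields the separation of the plane by $\mathcal{B}$ into the regions below, on, and above the curve. Regularity of $\mathcal{B}$ comes from the nonvanishing gradient $\nabla G|_{\mathcal{B}}$ computed above; more simply, $\mathcal{B}$ is the graph of an affine function of $\nu_1$, so it is a smooth embedded curve.

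Next, in the region below $\mathcal{B}$ I would invoke the local stability theorem for $E_0$ (eigenvalues $-A$, $Q(\mathcal{R}_0-1)$, $-\mu$). In the region above $\mathcal{B}$ I would invoke the existence formula \eqref{eq:endemic} together with the Routh--Hurwitz theorem for $E^*$. On $\mathcal{B}$, the identity $I^*=\frac{A}{\beta}(\mathcal{R}_0-1)=0$ shows that $S^*=Q/\beta=\lambda/A=S_0$ and that $R^*$ reduces to the disease-free value, so $E^*=E_0$.

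The remaining claim is the transverse bifurcation, and this is where some care is needed. Fix an arbitrary $\nu_1^0\ge 0$ and the point $(\beta_c(\nu_1^0),\nu_1^0)\in\mathcal{B}$. Theorem~\ref{thm:forward_bifurcation} applies verbatim with this $\nu_1^0$, because $a=-Q^2/\lambda<0$ and $b=\lambda/A>0$ hold for every admissible $\nu_1$. The $\beta$-direction, i.e.\ the horizontal line $\nu_1=\nu_1^0$, is transverse to $\mathcal{B}$, since $\partial G/\partial\beta=\lambda/(AQ)\neq0$; equivalently, $\mathcal{B}$ is a graph over $\nu_1$.

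The main obstacle is to justify that ``transverse'' means more than this single coordinate direction. For that I would take any smooth path $\gamma(s)$ crossing $\mathcal{B}$ at $s=0$ with $\tfrac{d}{ds}G(\gamma(s))|_{0}\neq0$. Along such a path the same Castillo--Chavez--Song computation applies with $s$ as the parameter. The coefficient $a$ is unchanged, because it involves only second derivatives in the state variables, which are independent of $\nu_1$ at fixed $\beta_c$. The coefficient $b$ becomes $w^T D_{xs}f\,v$. Because $w=e_2$ and $f_2=\beta SI-QI$, one has $\partial f_2/\partial I=\beta S-Q$ at the DFE, which equals $Q(\mathcal{R}_0-1)$. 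Therefore $b=Q\,\tfrac{d}{ds}\mathcal{R}_0(\gamma(s))|_{0}$, and this is positive for a path oriented from below to above $\mathcal{B}$. With $a<0$ and $b>0$, the bifurcation is forward along every transverse crossing, which completes the argument.
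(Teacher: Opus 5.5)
Your proposal is correct. For the separation, regularity, stability and coincidence claims you argue exactly as the paper does: its proof cites the threshold equivalence, the earlier stability theorems, the nonvanishing gradient of $G$, and the one-parameter center-manifold computation.

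Where you differ is the bifurcation claim. The paper justifies ``forward transcritical bifurcation in the direction transverse to $\mathcal{B}$'' only along the $\beta$-direction. It reads each line $\nu_1=\nu_1^0$ as a cross-section on which Theorem~\ref{thm:forward_bifurcation} applies, with $a=-Q^2/\lambda$ and $b=\lambda/A$. You also treat an arbitrary transverse path $\gamma(s)$ and obtain $b=Q\,\frac{d}{ds}\mathcal{R}_0(\gamma(s))|_{s=0}$. This recovers the paper's $b=\lambda/A$ for $\gamma(s)=(\beta_c+s,\nu_1^0)$, and gives, for example, $b=Q/A$ when vaccination alone decreases at fixed $\beta$. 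The paper's route is shorter and suffices if ``transverse'' just means ``along $\beta$''. Yours actually justifies the geometric wording, including crossings driven purely by vaccination.

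One step should be made explicit. When $\nu_1$ varies along the path, the disease-free equilibrium $E_0=(\lambda/(\nu_1+\mu),0,\lambda\nu_1/(\mu(\nu_1+\mu)))$ moves with $s$. The Castillo--Chavez--Song theorem, however, needs an equilibrium fixed at the origin for all parameter values. You should therefore apply it to $g(y,s)=f(E_0(\gamma(s))+y,\gamma(s))$. Then $D_yg(0,s)=J(E_0(\gamma(s)),\gamma(s))$ and $b=w^{T}\frac{d}{ds}J(E_0(\gamma(s)),\gamma(s))|_{s=0}\,v$, while $a$ is unchanged by the translation.

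Your step of evaluating $\partial f_2/\partial I=\beta S-Q$ at the moving DFE, writing it as $Q(\mathcal{R}_0-1)$, and then differentiating in $s$ computes exactly this quantity, so your formula is right. Read literally, though, ``$w^{T}D_{xs}f\,v$'' at the fixed point $E_0(\gamma(0))$ gives only $\beta'(0)S_0$. That misses the term $\beta\,\frac{d}{ds}S_0(\gamma(s))$, and it would vanish on a pure $\nu_1$-path, falsely suggesting a degenerate crossing.
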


\begin{proof}
The threshold curve follows from $\mathcal{R}_0(\beta,\nu_1)=1$. The local stability properties of the two parameter regions follow from the stability results established previously, while the center-manifold analysis establishes the forward direction of the transcritical bifurcation. The nonvanishing gradient of $G(\beta,\nu_1)=\mathcal{R}_0-1$ ensures that the threshold set is a regular curve. Therefore, the curve $\mathcal{B}$ continuously separates the disease-free and endemic parameter regimes.
\end{proof}

\section{Analytical threshold structure}
\label{sec_threshold}

The results obtained above can be summarized through the following threshold structure:

\begin{equation}
\boxed{
\begin{array}{ccl}
\Rzero<1
&
\Longrightarrow
&
E_0\text{ locally asymptotically stable},
\\[2mm]
\Rzero=1
&
\Longrightarrow
&
E_0=E^*\text{ and forward transcritical bifurcation},
\\[2mm]
\Rzero>1
&
\Longrightarrow
&
E^*\text{ exists and is locally asymptotically stable}.
\end{array}
}
\label{eq_thresholdsummary}
\end{equation}

The critical transmission threshold is
\begin{equation}
\boxed{
\beta_c
=
\frac{(\nu_1+\mu)(\sigma+\nu_2+\mu)}
{\lambda}.
}
\end{equation}

Consequently,
\begin{equation}
\beta<\beta_c
\quad\Longleftrightarrow\quad
\Rzero<1,
\qquad
\beta>\beta_c
\quad\Longleftrightarrow\quad
\Rzero>1.
\end{equation}

The explicit endemic branch can be written in terms of the distance from the critical transmission rate as
\begin{equation}
I^*
=
\frac{\lambda}{Q\beta}(\beta-\beta_c).
\label{eq_Istar_betac}
\end{equation}

Hence,
\begin{equation}
I^*<0 \quad\text{for}\quad \beta<\beta_c,
\qquad
I^*=0 \quad\text{for}\quad \beta=\beta_c,
\qquad
I^*>0 \quad\text{for}\quad \beta>\beta_c.
\end{equation}

Moreover,
\begin{equation}
\left.
\frac{\partial I^*}{\partial\beta}
\right|_{\beta=\beta_c}
=
\frac{\nu_1+\mu}{\beta_c^2}>0.
\end{equation}

Thus, the endemic equilibrium branch crosses the disease-free equilibrium at $\beta=\beta_c$ and enters the biologically feasible region as $\beta$ increases through $\beta_c$, confirming the forward direction predicted by the center-manifold analysis.

\section{Numerical results and bifurcation analysis}
\label{sec_numerical}

This section presents numerical simulations to complement the analytical results established in the preceding sections. The numerical experiments illustrate the long-term behavior of the model in the disease-free and endemic regimes, verify the threshold behavior associated with the basic reproduction number, confirm the forward transcritical bifurcation through numerical equilibrium continuation and eigenvalue analysis, and investigate the two-parameter interaction between transmission and vaccination. Finally, local and global sensitivity analyses are presented to identify the parameters that most strongly influence the basic reproduction number and the endemic infected population.

Unless otherwise stated, the numerical simulations use the parameter set
\begin{equation}
\lambda=0.0121,\qquad \beta=0.95,\qquad \mu=0.0069,\qquad
\sigma=0.0024,\qquad \nu_1=0.90,\qquad \nu_2=0.05.
\label{eq:numerical_parameters}
\end{equation}
The initial population is taken as
\begin{equation}
(S(0),I(0),R(0))=(0.7,0.3,0).
\label{eq:numerical_initial}
\end{equation}
Numerical solutions are obtained using the variable-step fourth-order Runge-Kutta method implemented through the MATLAB \texttt{ode45} solver, with relative and absolute tolerances of $10^{-9}$ and $10^{-11}$, respectively. The parameter values used in the numerical simulations are summarized in Table~\ref{tab_parameters}.

\begin{table}[htbp]
\centering
\begin{tabular}{cccc}
\hline
Parameter & Value & Description & Unit\\
\hline
$\lambda$ & $0.0121$ & Recruitment rate & year$^{-1}$\\
$\beta$ & $0.95$ or $5$ & Transmission rate & year$^{-1}$\\
$\mu$ & $0.0069$ & Natural mortality rate & year$^{-1}$\\
$\sigma$ & $0.0024$ & Disease-induced mortality rate & year$^{-1}$\\
$\nu_1$ & $0.90$ & Vaccination rate & year$^{-1}$\\
$\nu_2$ & $0.05$ & Treatment rate & year$^{-1}$\\
\hline
\end{tabular}
\caption{Parameter values used in the numerical simulations.}
\label{tab_parameters}
\end{table}

\subsection{Numerical dynamics in the disease-free and endemic regimes}
\label{subsec:numerical_dynamics}

We first consider the baseline transmission and vaccination parameters $\beta=0.95$ and $\nu_1=0.90$. For this parameter combination,
\begin{equation}
\mathcal{R}_0=\frac{(0.95)(0.0121)}{(0.90+0.0069)(0.0593)}
\approx0.213744<1.
\end{equation}
Hence, the analytical stability results predict that the disease-free equilibrium is locally asymptotically stable. The corresponding equilibrium is
\begin{equation}
E_0=
\left(
\frac{\lambda}{\nu_1+\mu},
0,
\frac{\lambda\nu_1}{\mu(\nu_1+\mu)}
\right)
\approx(0.013342,0,1.740281).
\end{equation}
Although the initial condition contains infected individuals, the numerical solution shows that the infected population decreases toward zero while the susceptible and recovered populations approach their corresponding disease-free equilibrium values. This behavior agrees with the analytical stability result for $\mathcal{R}_0<1$.

To illustrate the endemic regime, we next increase the transmission parameter to $\beta=5$ while keeping $\nu_1=0.90$ and the remaining parameters fixed. For this parameter combination,
\begin{equation}
\mathcal{R}_0=\frac{(5)(0.0121)}{(0.9069)(0.0593)}
\approx1.124971>1.
\end{equation}
Consequently, the disease-free equilibrium loses stability and a biologically feasible endemic equilibrium emerges. The endemic equilibrium is
\begin{equation}
E^*=(S^*,I^*,R^*),
\end{equation}
where
\begin{equation}
S^*=\frac{\sigma+\nu_2+\mu}{\beta},
\qquad
I^*=\frac{\lambda}{\sigma+\nu_2+\mu}-\frac{\nu_1+\mu}{\beta},
\qquad
R^*=\frac{\nu_1S^*+\nu_2I^*}{\mu}.
\end{equation}
For $\beta=5$ and $\nu_1=0.90$, this gives
\begin{equation}
E^*\approx(0.011860,0.022667,1.711212).
\end{equation}
The numerical solution converges toward this positive endemic equilibrium, confirming persistence of infection when $\mathcal{R}_0>1$.

\begin{figure}[htbp]
\centering
\begin{subfigure}[b]{0.4\textwidth}
\centering
\includegraphics[width=\textwidth]{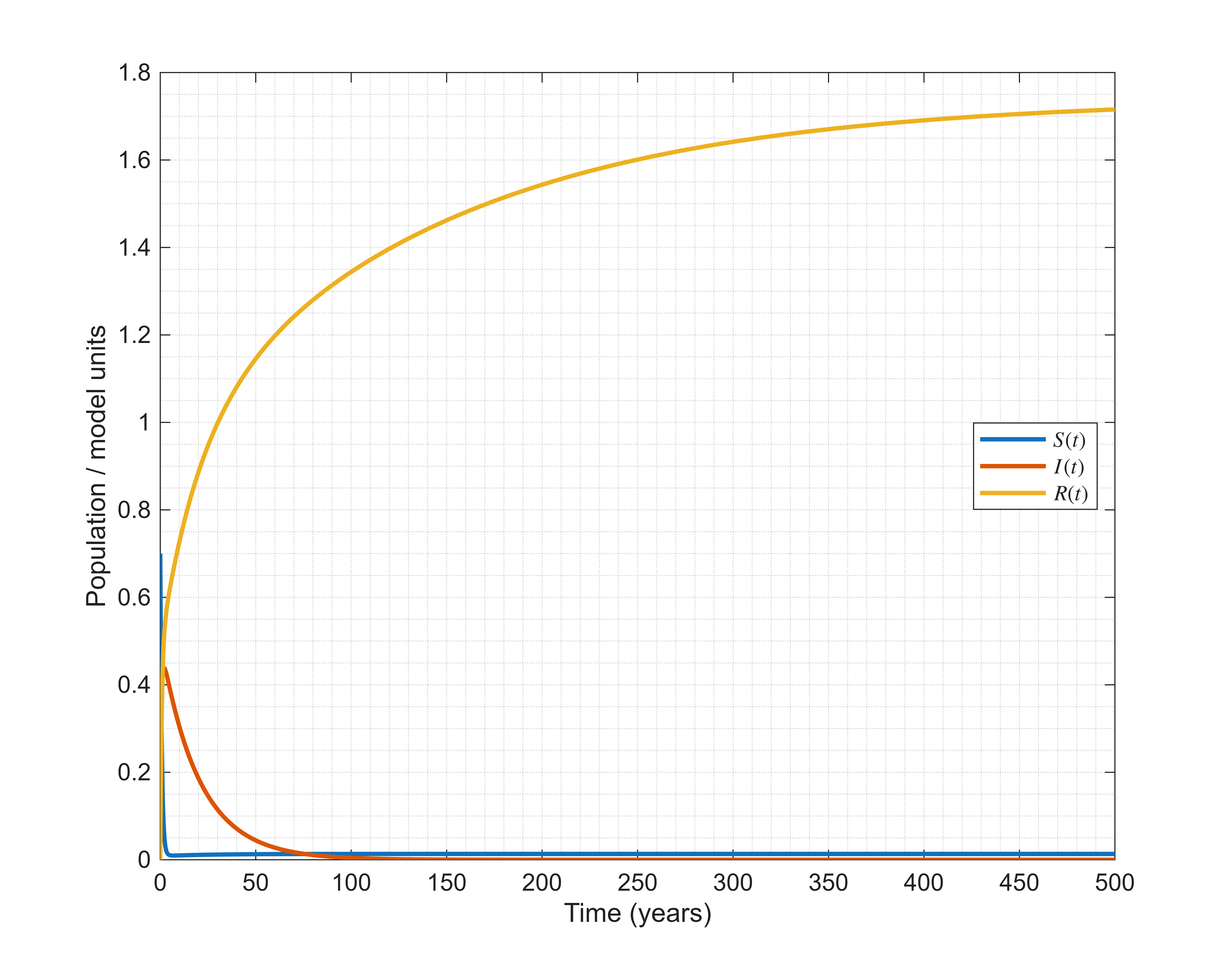}
\caption{Disease-free regime for $\mathcal{R}_0<1$.}
\label{fig:dfe_dynamics}
\end{subfigure}
\hfill
\begin{subfigure}[b]{0.4\textwidth}
\centering
\includegraphics[width=\textwidth]{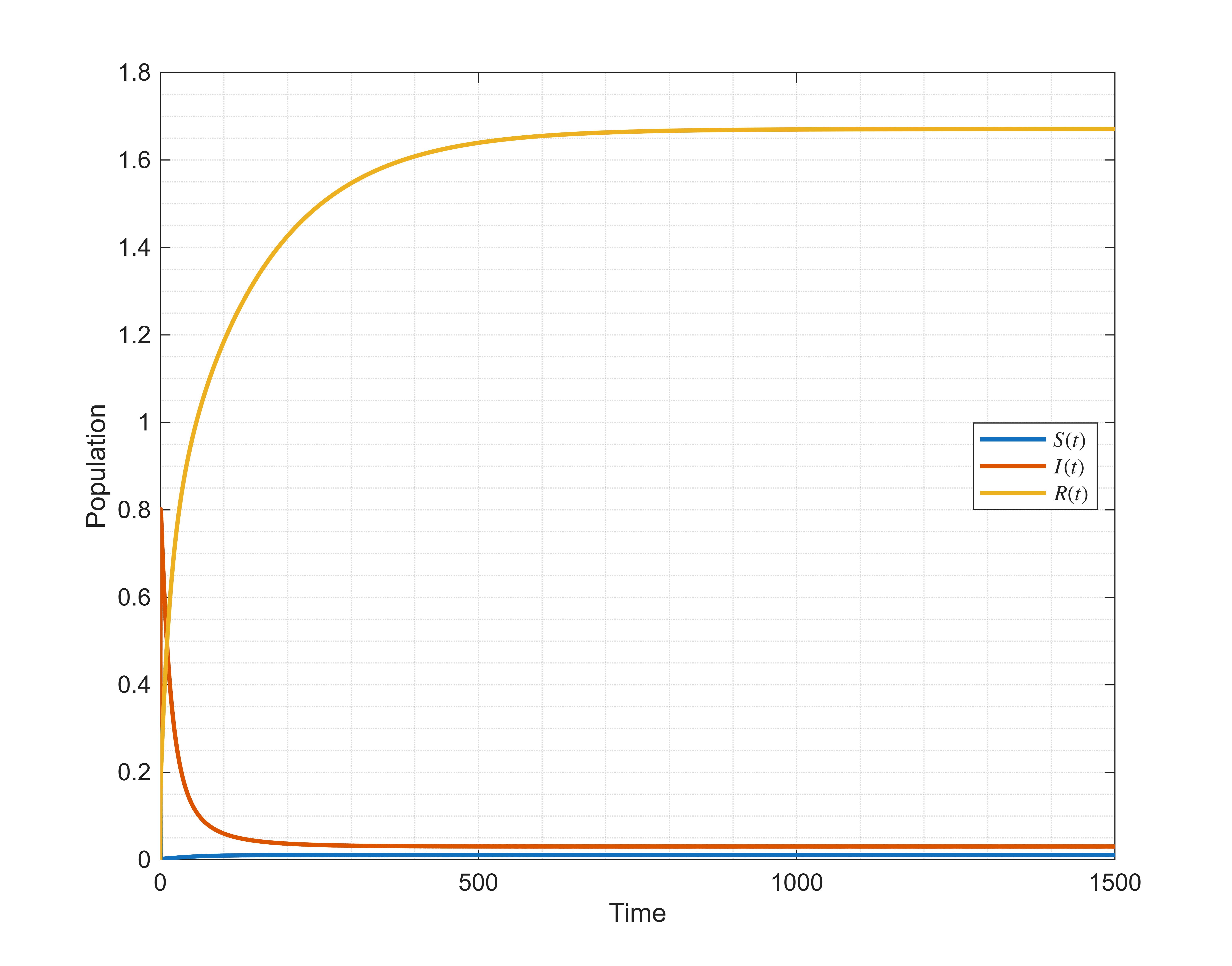}
\caption{Endemic regime for $\mathcal{R}_0>1$.}
\label{fig:endemic_dynamics}
\end{subfigure}
\caption{Numerical dynamics of the model under the two epidemic regimes. Panel (a) shows convergence toward the disease-free equilibrium when $\mathcal{R}_0<1$, whereas panel (b) shows convergence toward the endemic equilibrium when $\mathcal{R}_0>1$.}
\label{fig:dynamics}
\end{figure}

\subsection{Threshold behavior and one-parameter bifurcation}
\label{subsec_one_parameter_numerical}

The dependence of the basic reproduction number on the transmission parameter is first examined while all other parameters are fixed at their baseline values. Figure~\ref{fig_threshold_bifurcation} presents the resulting variation of $\mathcal{R}_0$ together with the corresponding equilibrium bifurcation diagram.

The reproduction number increases monotonically with $\beta$ and intersects the threshold $\mathcal{R}_0=1$ at the critical transmission rate
\begin{equation}
\beta_c\approx4.45.
\end{equation}
More precisely, for $\nu_1=0.90$,
\begin{equation}
\beta_c=4.444560.
\end{equation}
Therefore,
\begin{equation}
\beta<\beta_c\Longrightarrow\mathcal{R}_0<1,
\qquad
\beta=\beta_c\Longrightarrow\mathcal{R}_0=1,
\qquad
\beta>\beta_c\Longrightarrow\mathcal{R}_0>1.
\end{equation}
For the baseline transmission value $\beta=0.95$, the reproduction number is approximately $0.214$, placing the baseline parameter set in the disease-free regime.

The one-parameter equilibrium diagram confirms the analytical bifurcation result. The disease-free branch corresponds to $I^*=0$, while the endemic branch emerges from this branch at $\beta=\beta_c$. Below the critical value, the disease-free equilibrium is stable, and the endemic equilibrium is not biologically feasible. Above the critical value, a positive endemic branch emerges and becomes stable, while the disease-free equilibrium loses stability. Thus, the numerical equilibrium continuation confirms the forward transcritical bifurcation established analytically in Section~\ref{sec_bifurcation}.

\begin{figure}[htbp]
\centering
\begin{subfigure}{0.4\textwidth}
\centering
\includegraphics[width=\textwidth]{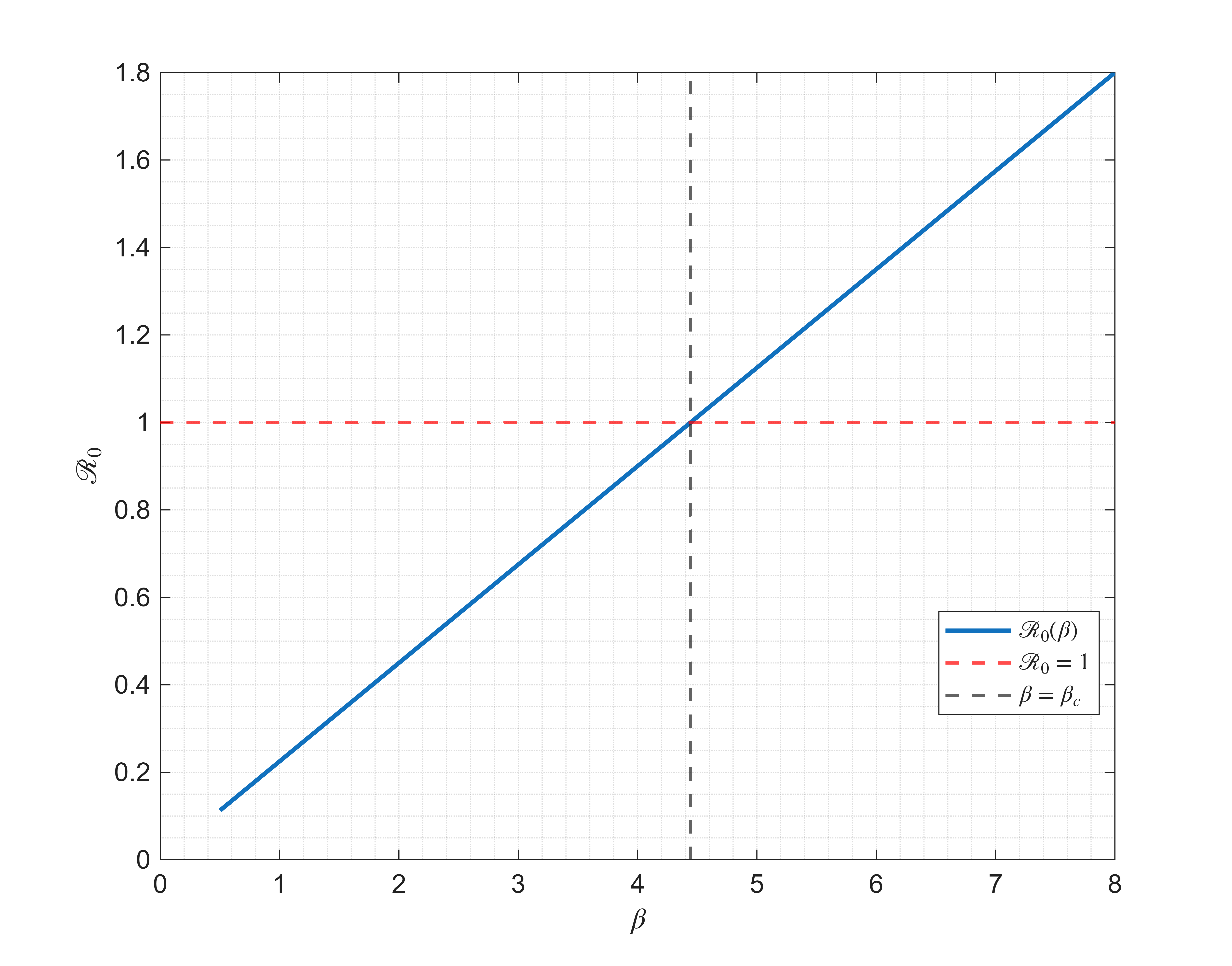}
\caption{Variation of $\mathcal{R}_0$ with $\beta$.}
\label{fig:R0_beta}
\end{subfigure}
\hfill
\begin{subfigure}{0.4\textwidth}
\centering
\includegraphics[width=\textwidth]{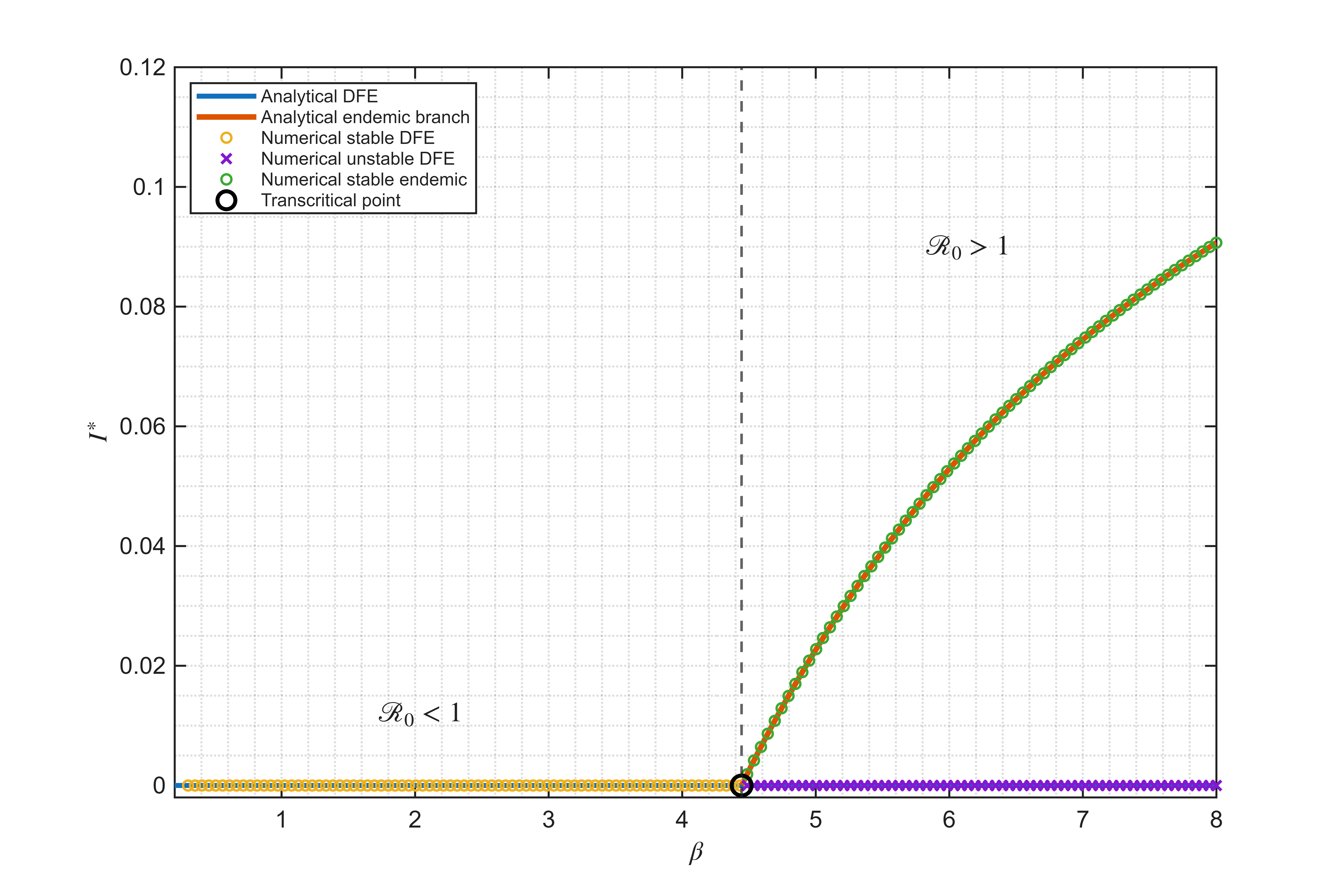}
\caption{One-parameter bifurcation diagram.}
\label{fig_bifurcation}
\end{subfigure}
\caption{Threshold and one-parameter bifurcation analysis with respect to the transmission parameter $\beta$. Panel (a) shows the variation of the basic reproduction number $\mathcal{R}_0$ and its intersection with the threshold $\mathcal{R}_0=1$. Panel (b) shows the equilibrium branches and the exchange of stability at the critical transmission value $\beta_c$.}
\label{fig_threshold_bifurcation}
\end{figure}

\subsection{Numerical verification through eigenvalue analysis}
\label{subsec_eigenvalue_verification}

To independently verify the stability exchange observed in the bifurcation diagram, the eigenvalues of the Jacobian evaluated at the relevant equilibrium points are computed over a range of transmission rates. Let $J(E,\beta)$ denote the Jacobian evaluated at an equilibrium $E$. Stability is determined by
\begin{equation}
\Lambda_{\max}(\beta)=\max_i\{\operatorname{Re}(\lambda_i(\beta))\}.
\end{equation}
An equilibrium is locally asymptotically stable whenever $\Lambda_{\max}<0$, whereas $\Lambda_{\max}>0$ indicates instability.

The maximum real part of the Jacobian eigenvalues remains negative below the critical transmission rate, approaches zero as $\beta$ approaches $\beta_c$, and crosses zero at the bifurcation point. Beyond the threshold, the corresponding value becomes positive for the disease-free equilibrium, confirming its instability when $\mathcal{R}_0>1$. Thus, the eigenvalue calculation provides an independent numerical verification of the stability exchange associated with the forward transcritical bifurcation.

\begin{figure}[htbp]
\centering
\includegraphics[width=0.68\textwidth]{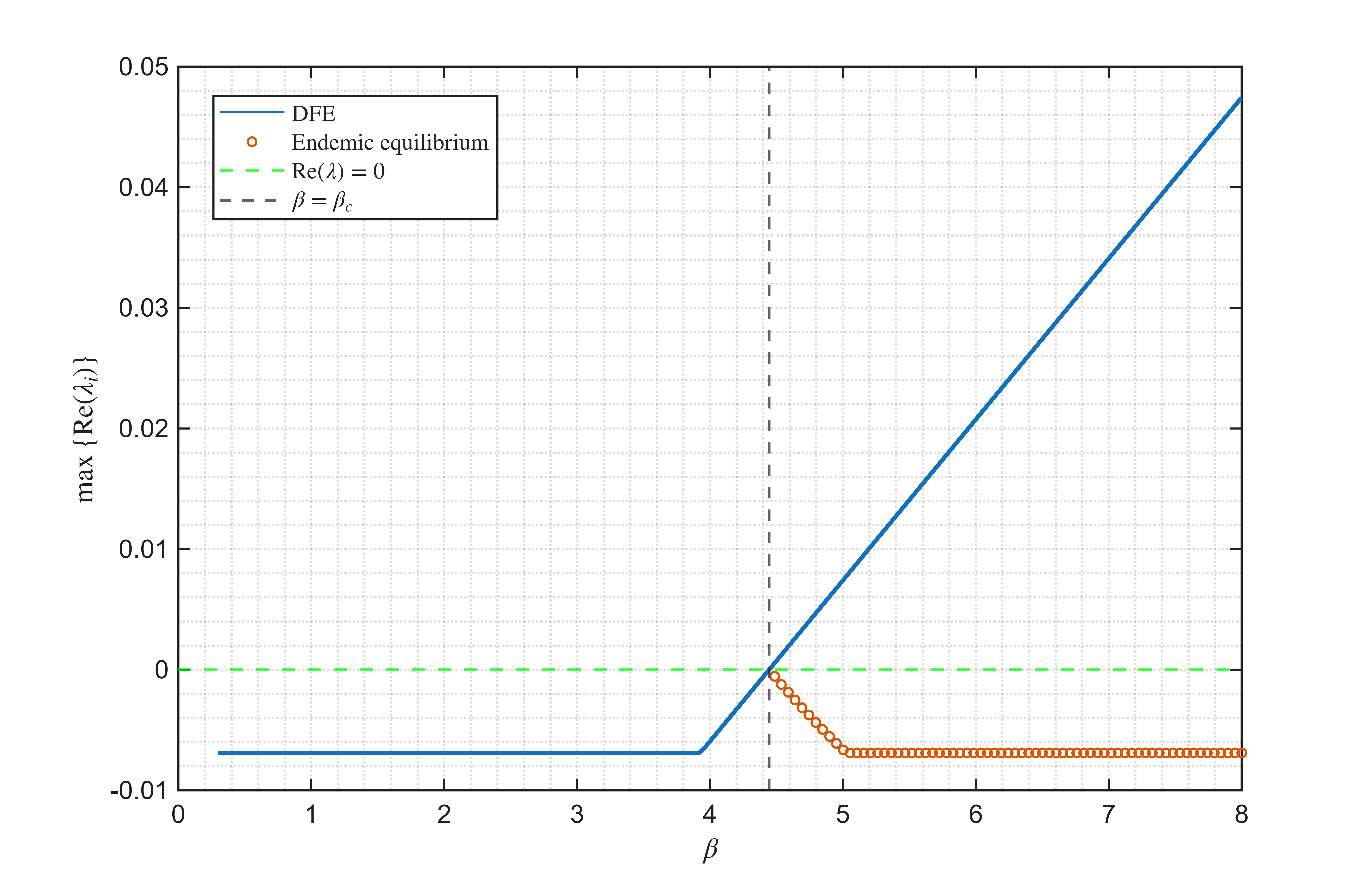}
\caption{Numerical stability verification of the disease-free equilibrium using the maximum real part of the Jacobian eigenvalues as a function of the transmission rate $\beta$. The zero crossing occurs at the critical value $\beta_c$, confirming the loss of stability of the disease-free equilibrium.}
\label{fig:eigenvalue}
\end{figure}

\subsection{Two-parameter threshold analysis}
\label{subsec_two_parameter_numerical}

The preceding analysis considers $\beta$ as the sole bifurcation parameter. We now investigate the interaction between transmission and vaccination by varying $(\beta,\nu_1)$ simultaneously. The threshold condition
\begin{equation}
\mathcal{R}_0(\beta,\nu_1)=1
\end{equation}
defines the boundary between the disease-free and endemic parameter regimes. The corresponding critical transmission rate is
\begin{equation}
\beta_c(\nu_1)=\frac{(\nu_1+\mu)(\sigma+\nu_2+\mu)}{\lambda}.
\end{equation}
The numerical representation confirms that $\beta_c$ increases monotonically with $\nu_1$. For the baseline vaccination parameter $\nu_1=0.90$, the critical transmission rate is approximately $\beta_c=4.45$. Thus, increasing vaccination shifts the epidemic threshold toward larger transmission rates.

The two-parameter threshold curve is shown together with the distribution of $\mathcal{R}_0(\beta,\nu_1)$ in Figure~\ref{fig_two_parameter_threshold}. Below the threshold curve, $\mathcal{R}_0<1$ and the disease-free equilibrium is the epidemiologically relevant stable state. Above the curve, $\mathcal{R}_0>1$ and the endemic equilibrium becomes feasible and stable. The heat-map representation provides a complementary view of the same threshold geometry and demonstrates that increasing transmission moves the system toward the endemic region, whereas increasing vaccination shifts it toward the disease-free region.

\begin{figure}[htbp]
\centering
\begin{subfigure}[b]{0.32\textwidth}
\centering
\includegraphics[width=\textwidth]{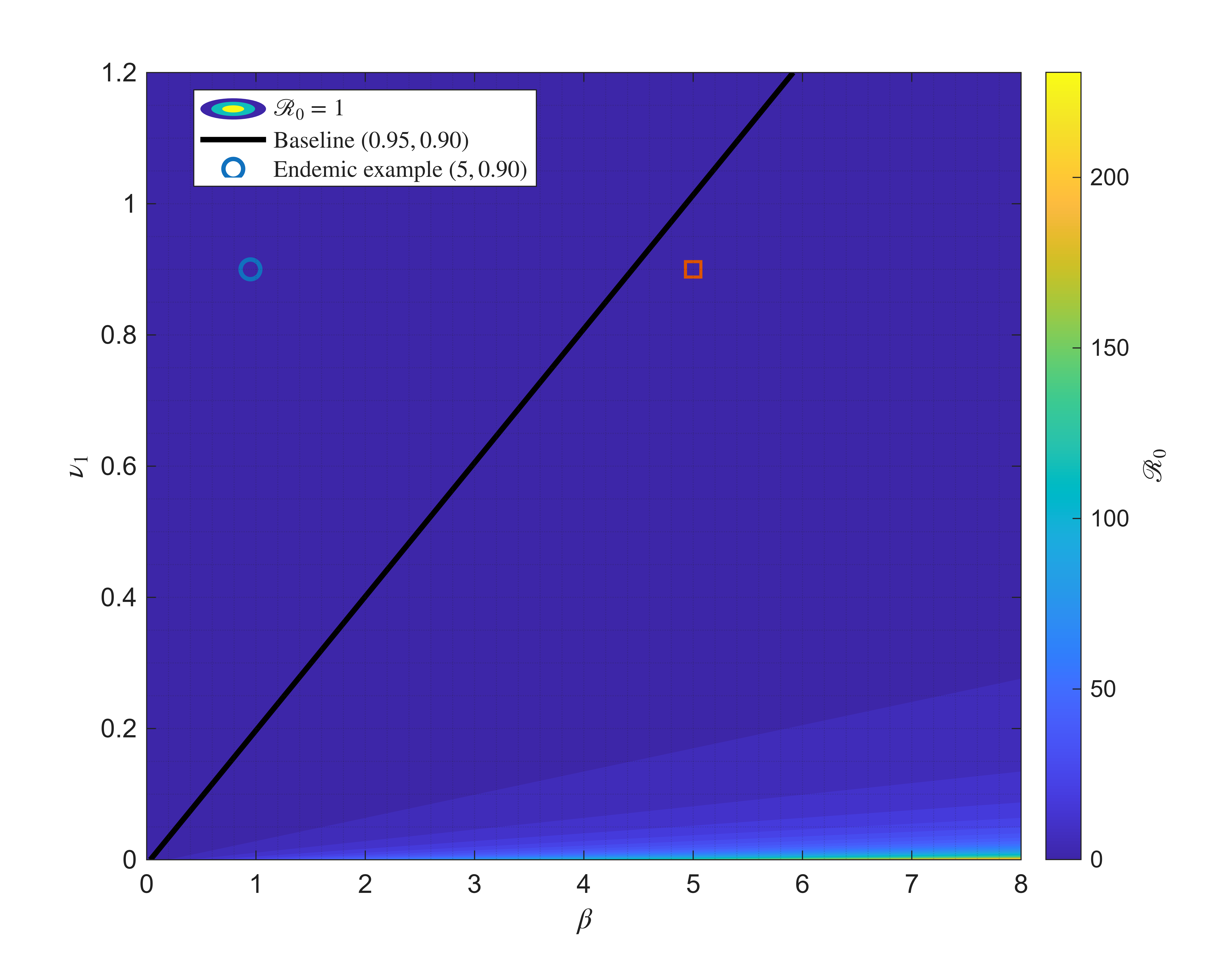}
\caption{$\beta_c$ versus $\nu_1$.}
\label{fig_beta_c_nu1}
\end{subfigure}
\hfill
\begin{subfigure}[b]{0.32\textwidth}
\centering
\includegraphics[width=\textwidth]{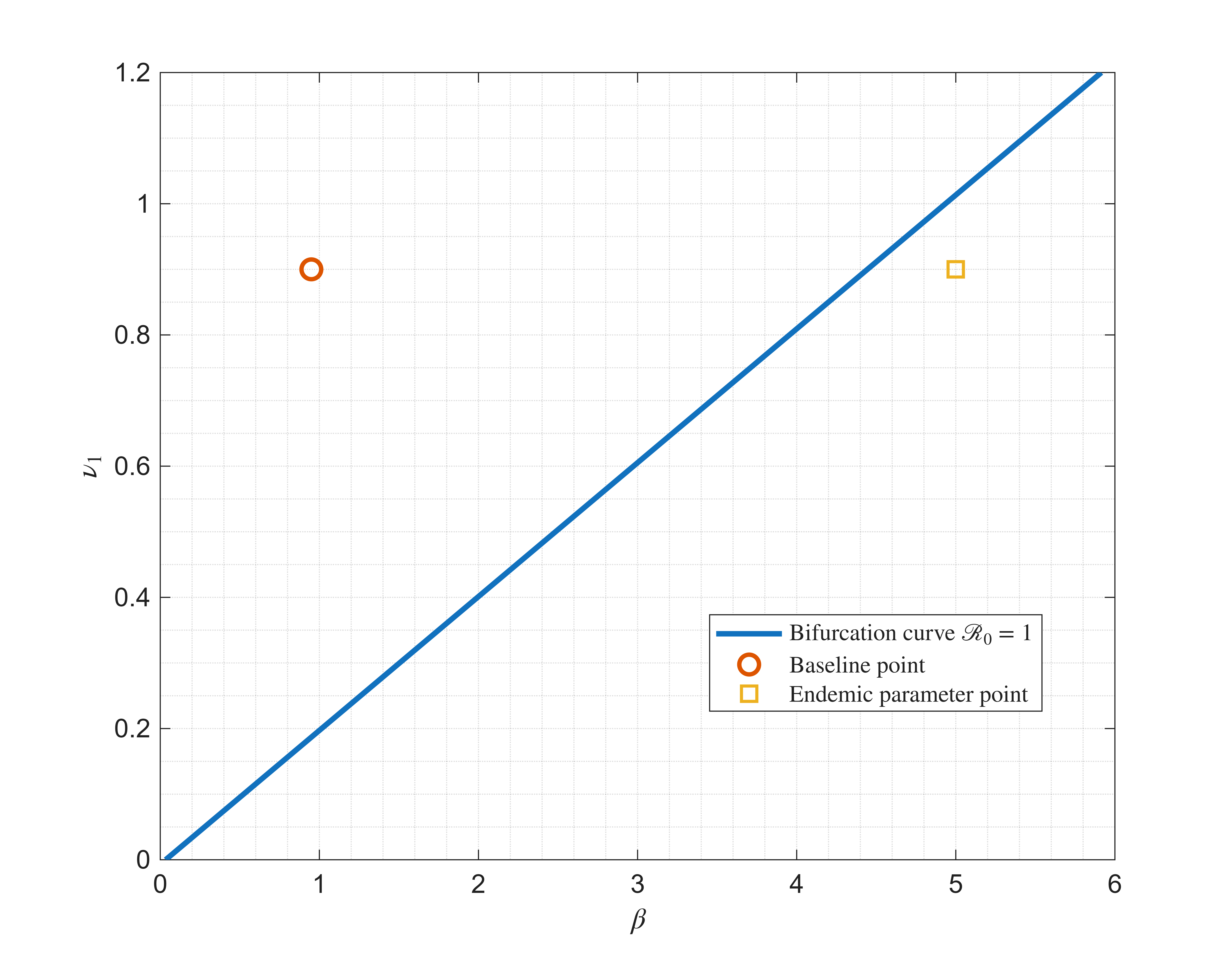}
\caption{Threshold curve $\mathcal{R}_0=1$.}
\label{fig_threshold_curve}
\end{subfigure}
\hfill
\begin{subfigure}[b]{0.32\textwidth}
\centering
\includegraphics[width=\textwidth]{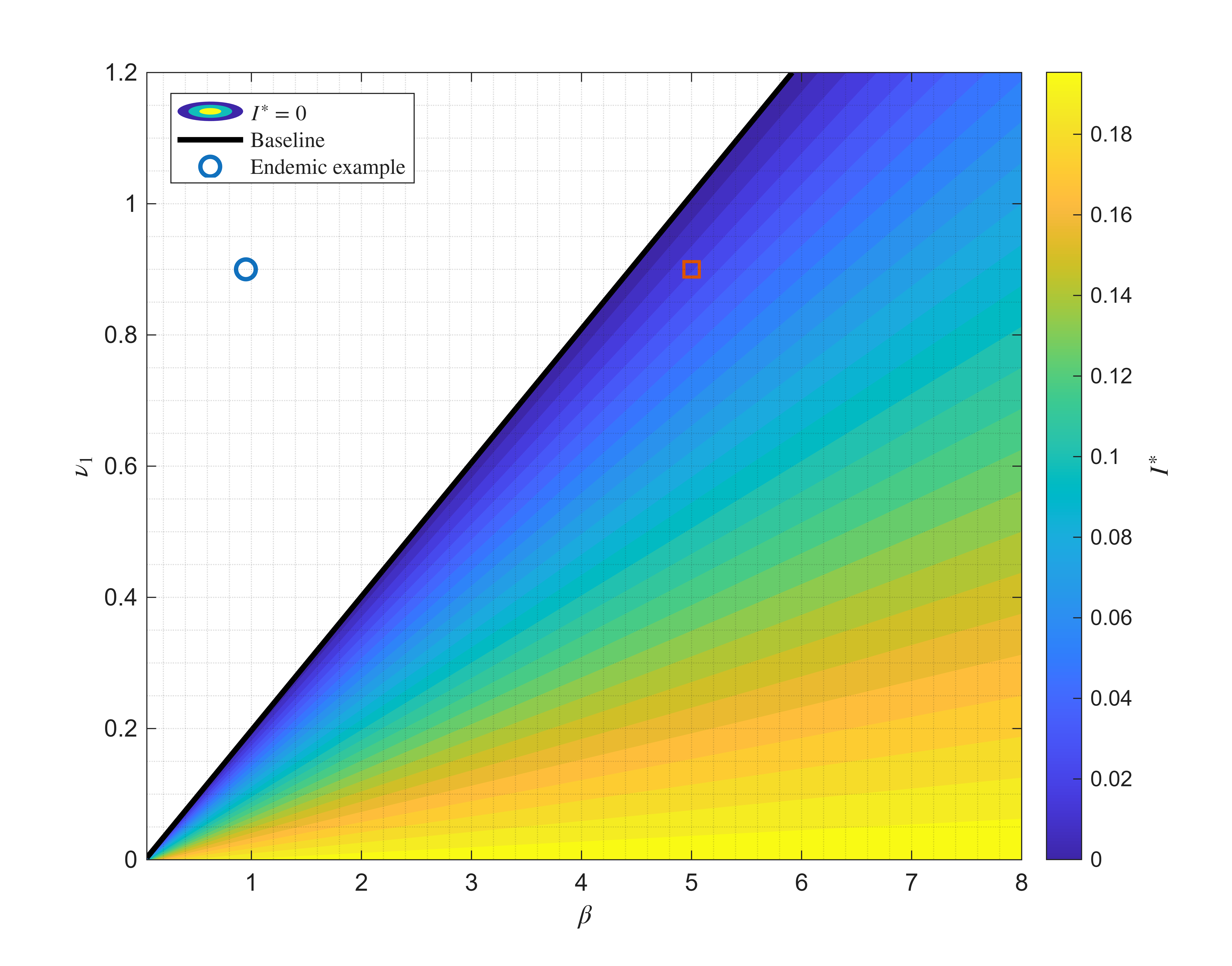}
\caption{Heat map of $\mathcal{R}_0(\beta,\nu_1)$.}
\label{fig_R0_heatmap}
\end{subfigure}
\caption{Two-parameter threshold geometry in the $(\beta,\nu_1)$ parameter plane. Panel (a) illustrates the dependence of the critical transmission rate $\beta_c$ on vaccination. Panel (b) shows the threshold curve $\mathcal{R}_0(\beta,\nu_1)=1$, which separates the disease-free and endemic parameter regimes. Panel (c) presents the corresponding distribution of $\mathcal{R}_0$ over the parameter plane.}
\label{fig_two_parameter_threshold}
\end{figure}

\subsection{Two-parameter endemic infection analysis}
\label{subsec_endemic_surface_numerical}

To examine the magnitude of infection within the endemic region, the endemic infected population
\begin{equation}
I^*=I^*(\beta,\nu_1)
=
\frac{\lambda}{\sigma+\nu_2+\mu}
-\frac{\nu_1+\mu}{\beta}
\end{equation}
is evaluated over the two-parameter space. Figure~\ref{fig_endemic_surface} presents the resulting three-dimensional representation of the endemic infected population as a function of the transmission rate $\beta$ and vaccination rate $\nu_1$.

The three-dimensional surface illustrates the dependence of the endemic infection level on the transmission and vaccination parameters. For a fixed vaccination rate, increasing $\beta$ increases the endemic infection level. Conversely, for a fixed transmission rate, increasing $\nu_1$ decreases the endemic infection burden.

\begin{figure}[htbp]
\centering
\includegraphics[width=0.62\textwidth]{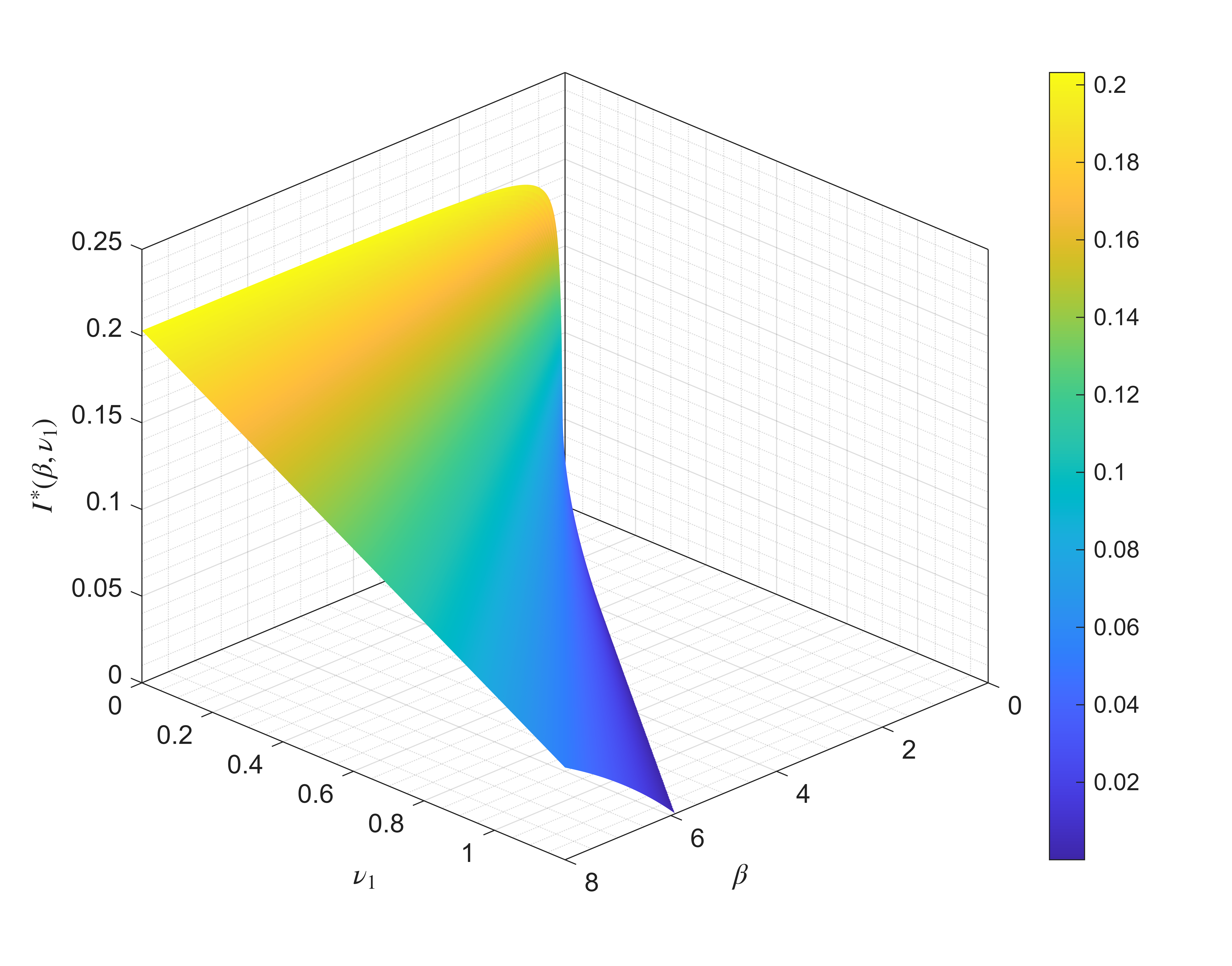}
\caption{Three-dimensional representation of the endemic infected population $I^*(\beta,\nu_1)$ over the transmission-vaccination parameter plane. The endemic infection level increases with the transmission rate $\beta$ and decreases with the vaccination rate $\nu_1$.}
\label{fig_endemic_surface}
\end{figure}

\subsection{Local sensitivity analysis of  \texorpdfstring{$\mathcal{R}_0=1$}{R0=1}}
\label{subsec_local_sensitivity}

The normalized sensitivity indices of the basic reproduction number are used to quantify the relative influence of the model parameters around the baseline parameter set. Since
\begin{equation}
\mathcal{R}_0=
\frac{\beta\lambda}{(\nu_1+\mu)(\sigma+\nu_2+\mu)},
\end{equation}
the normalized sensitivity indices are
\begin{equation}
\Upsilon_\beta^{\mathcal{R}_0}=1,
\qquad
\Upsilon_\lambda^{\mathcal{R}_0}=1,
\end{equation}
\begin{equation}
\Upsilon_{\nu_1}^{\mathcal{R}_0}
=
-\frac{\nu_1}{\nu_1+\mu},
\qquad
\Upsilon_{\sigma}^{\mathcal{R}_0}
=
-\frac{\sigma}{\sigma+\nu_2+\mu},
\end{equation}
\begin{equation}
\Upsilon_{\nu_2}^{\mathcal{R}_0}
=
-\frac{\nu_2}{\sigma+\nu_2+\mu},
\end{equation}
and
\begin{equation}
\Upsilon_{\mu}^{\mathcal{R}_0}
=
-\frac{\mu}{\nu_1+\mu}
-\frac{\mu}{\sigma+\nu_2+\mu}.
\end{equation}
The resulting sensitivity profile demonstrates the positive influence of transmission and recruitment on $\mathcal{R}_0$, while vaccination, treatment, and removal mechanisms act in the direction of reducing the reproduction number.

\begin{figure}[htbp]
\centering
\includegraphics[width=0.32\textwidth]{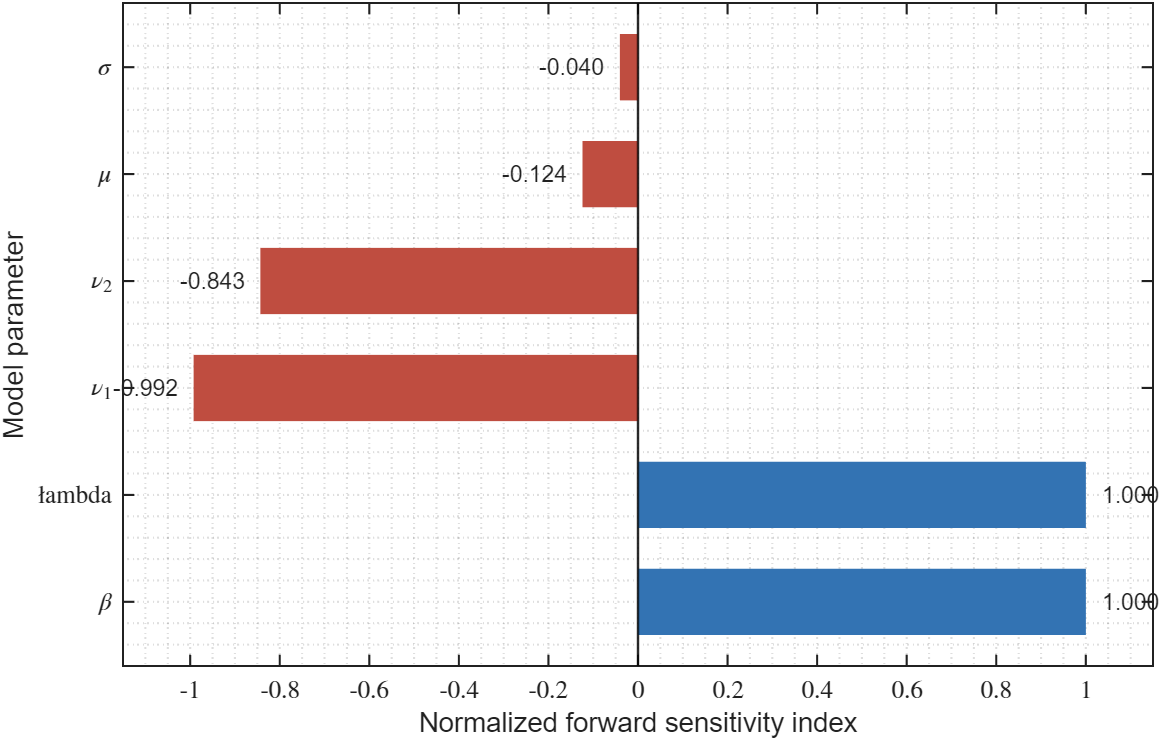}
\caption{Normalized sensitivity indices of the basic reproduction number $\mathcal{R}_0$ with respect to the model parameters. Positive indices indicate parameters that increase $\mathcal{R}_0$, whereas negative indices indicate parameters that suppress the epidemic threshold.}
\label{fig:sensitivity_r0}
\end{figure}

\subsection{Global LHS-PRCC analysis of the endemic infected population}
\label{subsec_prcc_I}

To complement the local sensitivity analysis, a global sensitivity analysis based on Latin hypercube sampling (LHS) and partial rank correlation coefficients (PRCCs) is performed for the endemic infected population $I^*$. The LHS procedure generates a representative ensemble of parameter sets from the prescribed parameter ranges, and the corresponding endemic equilibrium is evaluated for each realization.

The resulting distribution of $I^*$ is shown in Figure~\ref{fig:global_sensitivity_I}. The nominal endemic value for the baseline parameter set is approximately
\begin{equation}
I^*=0.0227.
\end{equation}
The sampled distribution demonstrates substantial variation in the endemic infection burden under parameter uncertainty. The corresponding PRCC analysis identifies the parameters responsible for this variation. The transmission rate $\beta$ exhibits a strong positive association with $I^*$, with
\begin{equation}
\mathrm{PRCC}_{\beta}\approx0.866,
\end{equation}
whereas vaccination and treatment exhibit strong negative associations, with approximately
\begin{equation}
\mathrm{PRCC}_{\nu_1}\approx-0.872,
\qquad
\mathrm{PRCC}_{\nu_2}\approx-0.886.
\end{equation}
The mortality-related parameters exhibit comparatively weaker associations.

\begin{figure}[htbp]
\centering
\begin{subfigure}[b]{0.4\textwidth}
\centering
\includegraphics[width=\textwidth]{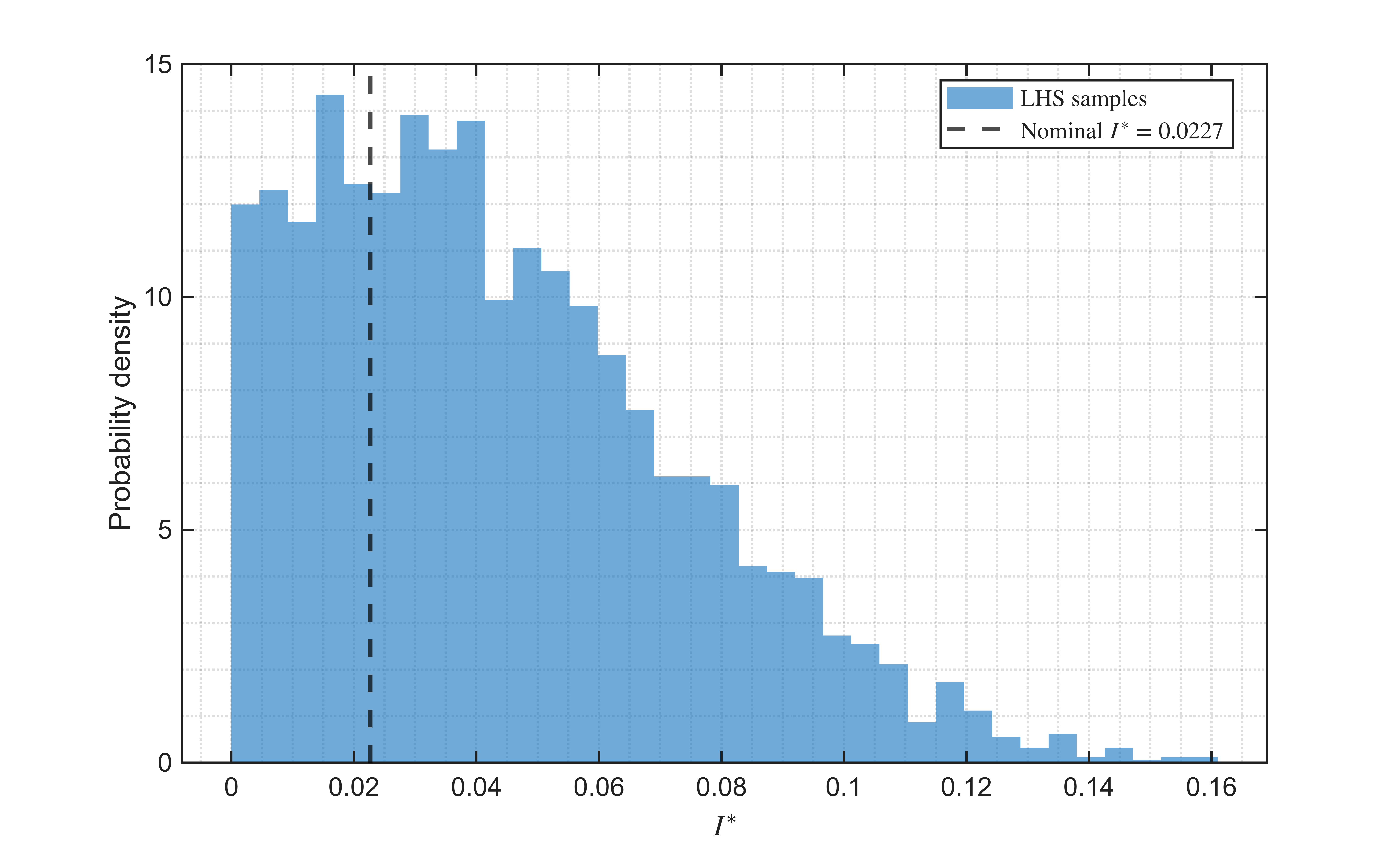}
\caption{LHS distribution of $I^*$.}
\label{fig:LHS_Istar}
\end{subfigure}
\hfill
\begin{subfigure}[b]{0.4\textwidth}
\centering
\includegraphics[width=\textwidth]{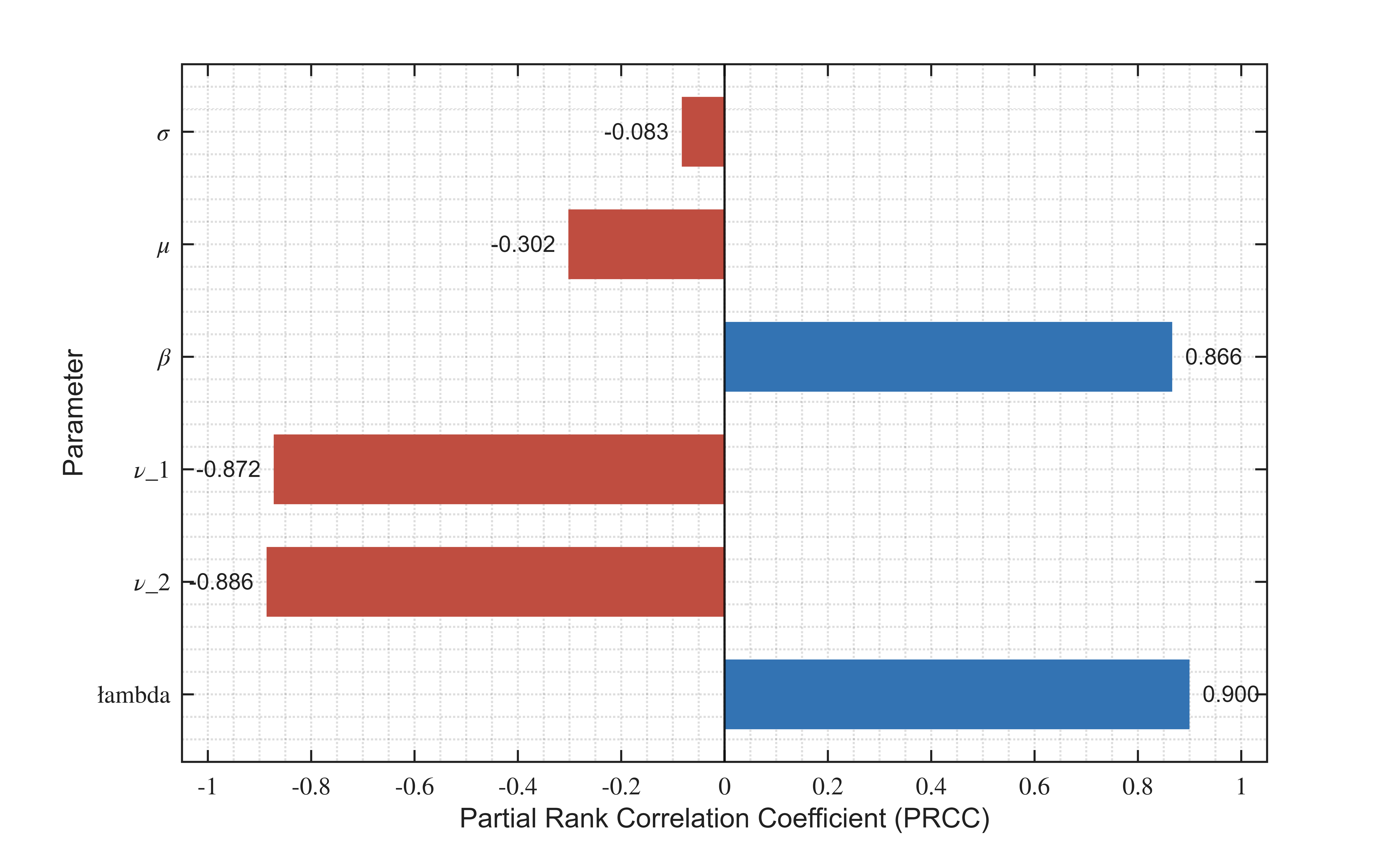}
\caption{PRCC analysis of $I^*$.}
\label{fig:PRCC_Istar}
\end{subfigure}
\caption{Global sensitivity analysis of the endemic infected population $I^*$. Panel (a) presents the distribution of $I^*$ obtained through Latin hypercube sampling, while panel (b) shows the corresponding partial rank correlation coefficients.}
\label{fig:global_sensitivity_I}
\end{figure}

\subsection{Global LHS-PRCC analysis of the basic reproduction number}
\label{subsec_prcc_R0}

A corresponding global sensitivity analysis is performed for the basic reproduction number. Figure~\ref{fig:global_sensitivity_R0} presents the LHS distribution of $\mathcal{R}_0$ and the associated PRCC results. For the baseline parameter set, the nominal reproduction number is approximately
\begin{equation}
\mathcal{R}_0=0.214.
\end{equation}
The LHS distribution demonstrates that parameter uncertainty can substantially modify the transmission potential of HBV and may move the system toward the threshold $\mathcal{R}_0=1$.

The PRCC analysis identifies the transmission rate and recruitment rate as the dominant positive contributors to $\mathcal{R}_0$, with
\begin{equation}
\mathrm{PRCC}_{\beta}\approx0.856,
\qquad
\mathrm{PRCC}_{\lambda}\approx0.857.
\end{equation}
In contrast, vaccination and treatment parameters exhibit negative associations with $\mathcal{R}_0$. These global sensitivity results are consistent with the analytical sensitivity indices.

\begin{figure}[htbp]
\centering
\begin{subfigure}[b]{0.4\textwidth}
\centering
\includegraphics[width=\textwidth]{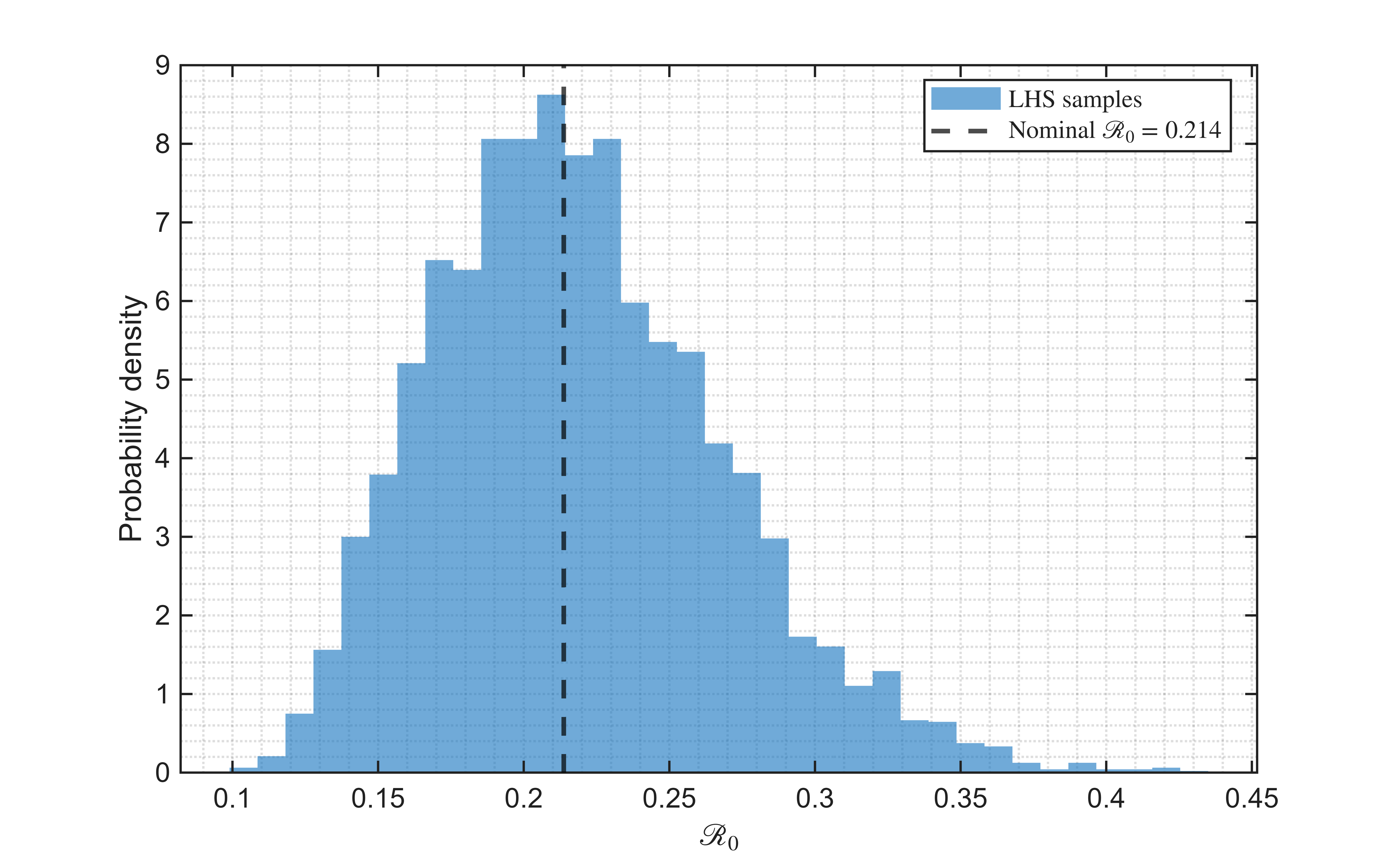}
\caption{LHS distribution of $\mathcal{R}_0$.}
\label{fig:LHS_R0}
\end{subfigure}
\hfill
\begin{subfigure}[b]{0.4\textwidth}
\centering
\includegraphics[width=\textwidth]{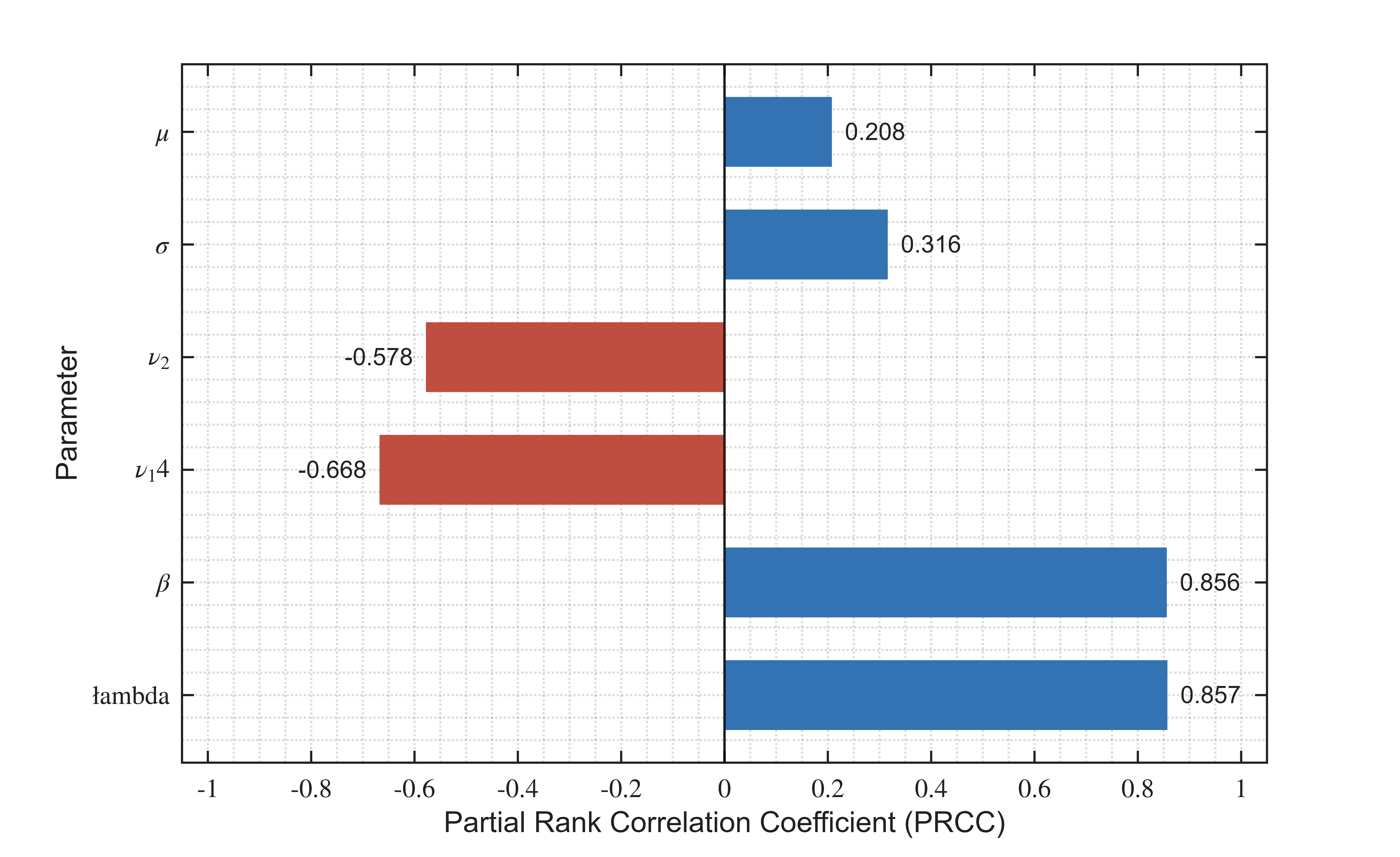}
\caption{PRCC analysis of $\mathcal{R}_0$.}
\label{fig:PRCC_R0_global}
\end{subfigure}
\caption{Global sensitivity analysis of the basic reproduction number $\mathcal{R}_0$. Panel (a) presents the distribution of $\mathcal{R}_0$ generated using Latin hypercube sampling, while panel (b) shows the corresponding partial rank correlation coefficients.}
\label{fig:global_sensitivity_R0}
\end{figure}

\section{Data-driven parameterization and validation}
\label{sec_data_parameterization}

To establish a quantitative connection between the proposed HBV transmission model and the epidemiological burden of hepatitis B in India, publicly available estimates from the Institute for Health Metrics and Evaluation (IHME) Global Burden of Disease (GBD) \cite{IHME2025HBVincidence} study and Our World in Data \cite{OrtizOspinaRoser2016HBV} are incorporated into the parameterization and validation procedure. The GBD estimates provide population-level information on hepatitis B incidence and mortality, while demographic information constrains the demographic component of the model \cite{World2024global,epoch2025major}. The Indian epidemiological quantities considered in the analysis are summarized in Table~\ref{tab:india_hbv_data}.

\begin{table}[htbp]
\centering
\begin{tabular}{lccc}
\hline
Indicator & Estimate & Year & Unit\\
\hline
Hepatitis B deaths & 80,910 & 2023 & deaths\\
Hepatitis B death rate & 5.61 & 2023 & per 100,000 population\\
Annual incident cases & 8,699,537 & 2023 & cases/year\\
Incidence rate & 602.72 & 2023 & per 100,000 population\\
Total population & 1,443,385,420 & 2023 & persons\\
Life expectancy at birth & 71.56 & 2023 & years\\
Chronic Hepatitis B and C mortality & 8.74 & 2022 & per 100,000 population\\
\hline
\end{tabular}
\caption{India-specific hepatitis B and demographic indicators used for data-driven parameterization.}
\label{tab:india_hbv_data}
\end{table}

The aggregate GBD estimates are not treated as direct observations of individual transition rates. Instead, the epidemiological quantities are mapped onto the corresponding model observables. The calibration therefore estimates a biologically admissible parameter set that reproduces the available population-level targets while respecting the structural assumptions of the model.

\subsection{Conversion of epidemiological data to model-scale quantities}
\label{subsec:data_conversion}

Since the model is formulated in normalized population variables, the reported epidemiological quantities are converted into per-capita rates. The annual HBV incidence rate of $602.72$ per $100,000$ population gives

\begin{equation}
\mathcal{I}_{\mathrm{GBD}}
=
\frac{602.72}{100000}
=
0.0060272\ {\rm year}^{-1}.
\label{eq:gbd_incidence_rate}
\end{equation}

Similarly, the reported hepatitis B mortality rate of $5.61$ per $100,000$ population gives

\begin{equation}
\mathcal{D}_{\mathrm{GBD}}
=
\frac{5.61}{100000}
=
5.61\times10^{-5}\ {\rm year}^{-1}.
\label{eq:gbd_death_rate}
\end{equation}

The reported life expectancy at birth, $71.56$ years, provides a demographic reference, giving

\begin{equation}
\mu_{\mathrm{dem}}
=
\frac{1}{71.56}
\approx
0.0139743\ {\rm year}^{-1}.
\label{eq:life_expectancy_rate}
\end{equation}

This value is used as the demographic estimate for the natural mortality parameter $\mu$. Thus,

\begin{equation}
\mathcal{I}_{\mathrm{GBD}}=0.0060272\ {\rm year}^{-1},
\qquad
\mathcal{D}_{\mathrm{GBD}}=5.61\times10^{-5}\ {\rm year}^{-1},
\end{equation}

\begin{equation}
\mu=0.0139743\ {\rm year}^{-1}.
\end{equation}

These conversions place the epidemiological data on the same scale as the normalized mathematical model.

\subsection{Model observables and epidemiological targets}
\label{subsec:data_mapping}

The two principal epidemiological observables used for calibration are the incidence of new HBV infections and disease-induced mortality. In the proposed model,

\begin{equation}
\mathcal{I}_{\mathrm{model}}(t)
=
\beta S(t)I(t),
\label{eq:model_incidence}
\end{equation}

where $\beta$ denotes the transmission parameter and $S(t)$ and $I(t)$ represent the susceptible and infected fractions, respectively. The disease-induced mortality flow is

\begin{equation}
\mathcal{D}_{\mathrm{model}}(t)
=
\sigma I(t),
\label{eq:model_death}
\end{equation}

where $\sigma$ is the disease-associated mortality parameter.

Accordingly, the calibration seeks parameter combinations for which

\begin{equation}
\mathcal{I}_{\mathrm{model}}
\approx
\mathcal{I}_{\mathrm{GBD}}
=
0.0060272\ {\rm year}^{-1},
\end{equation}

and

\begin{equation}
\mathcal{D}_{\mathrm{model}}
\approx
\mathcal{D}_{\mathrm{GBD}}
=
5.61\times10^{-5}\ {\rm year}^{-1}.
\end{equation}

In addition, HBsAg prevalence estimates for 2015 and 2020 are incorporated as historical prevalence constraints. The 2015 infected fraction is initialized consistently with the reported prevalence, while the model trajectory is required to reproduce the 2020 prevalence estimate.

The calibration therefore combines four epidemiological quantities:
\begin{enumerate}
    \item HBsAg prevalence in 2015,
    \item HBsAg prevalence in 2020,
    \item annual HBV incidence in 2023, and
    \item HBV mortality in 2023.
\end{enumerate}

\subsection{Calibration formulation}
\label{subsec:calibration_formulation}

Let the vector of estimated parameters be

\begin{equation}
\boldsymbol{\theta}
=
(\lambda,\beta,\nu_1,\nu_2,\sigma,R_{2015}),
\end{equation}

where $\lambda$ is the recruitment rate, $\beta$ is the transmission parameter, $\nu_1$ is the vaccination rate, $\nu_2$ is the treatment rate, $\sigma$ is the disease-induced mortality rate, and $R_{2015}$ denotes the recovered fraction at the beginning of the calibration period. The natural mortality rate $\mu$ is determined independently from the demographic reference.

The initial susceptible fraction is determined from the population normalization condition,

\begin{equation}
S_{2015}
=
1-I_{2015}-R_{2015},
\label{eq:S2015_constraint}
\end{equation}

where

\begin{equation}
I_{2015}=0.017.
\end{equation}

The parameter estimation problem is formulated as the constrained nonlinear optimization problem

\begin{equation}
\boldsymbol{\theta}^{*}
=
\underset{\boldsymbol{\theta}\in\Theta}{\operatorname{arg\,min}}
\;J(\boldsymbol{\theta}),
\label{eq:parameter_estimation}
\end{equation}

where $\Theta$ denotes the biologically admissible parameter domain. A normalized least-squares objective is used,

\begin{equation}
J(\boldsymbol{\theta})
=
\sum_{j=1}^{m}
w_j
\left[
\frac{Y_j^{\mathrm{model}}(\boldsymbol{\theta})
-
Y_j^{\mathrm{data}}}
{Y_j^{\mathrm{data}}}
\right]^2,
\label{eq:calibration_objective}
\end{equation}

where $Y_j$ denotes the selected epidemiological observable and $w_j$ is its corresponding weight. The constrained optimization is solved using the sequential quadratic programming implementation of \texttt{fmincon} in MATLAB, with parameter bounds imposed to prevent biologically inadmissible solutions.

\subsection{Calibrated parameter set}
\label{subsec:calibrated_parameters}

The best-fitting parameter set obtained from the constrained optimization is

\begin{equation}
\mu=0.0139742873,
\end{equation}

\begin{equation}
\lambda=0.0145414309,
\qquad
\beta=0.5098268496,
\end{equation}

\begin{equation}
\nu_1=0.0050586726,
\qquad
\nu_2=0.3145988771,
\end{equation}

\begin{equation}
\sigma=0.0030898517.
\end{equation}

The corresponding initial recovered and susceptible fractions are

\begin{equation}
R_{2015}=0.2980581902,
\end{equation}

and

\begin{equation}
S_{2015}
=
1-I_{2015}-R_{2015}
=
0.6849418098.
\end{equation}

The resulting basic reproduction number is

\begin{equation}
\mathcal{R}_0=1.1744281971.
\label{eq:calibrated_R0}
\end{equation}

The optimized objective function and corresponding RMSE are

\begin{equation}
J_{\min}
=
1.0295696888\times10^{-13},
\end{equation}

and

\begin{equation}
\mathrm{RMSE}
=
2.09\times10^{-9}.
\end{equation}

The calibrated parameter values are summarized in Table~\ref{tab:calibrated_parameters}.

\begin{table}[htbp]
\centering
\begin{tabular}{lcc}
\hline
Parameter & Calibrated value & Interpretation\\
\hline
$\mu$ & $0.0139742873$ & Natural mortality rate\\
$\lambda$ & $0.0145414309$ & Recruitment rate\\
$\beta$ & $0.5098268496$ & Transmission parameter\\
$\nu_1$ & $0.0050586726$ & Vaccination rate\\
$\nu_2$ & $0.3145988771$ & Treatment rate\\
$\sigma$ & $0.0030898517$ & Disease-induced mortality rate\\
$R_{2015}$ & $0.2980581902$ & Initial recovered fraction\\
\hline
\end{tabular}
\caption{Calibrated parameter values obtained from the data-driven optimization.}
\label{tab:calibrated_parameters}
\end{table}

Using the calibrated parameter set, the temporal evolution of the susceptible, infected, and recovered population fractions is shown in Fig.~\ref{HBV_calibrated}.

\begin{figure}[ht]
    \centering
    \includegraphics[width=0.8\linewidth]{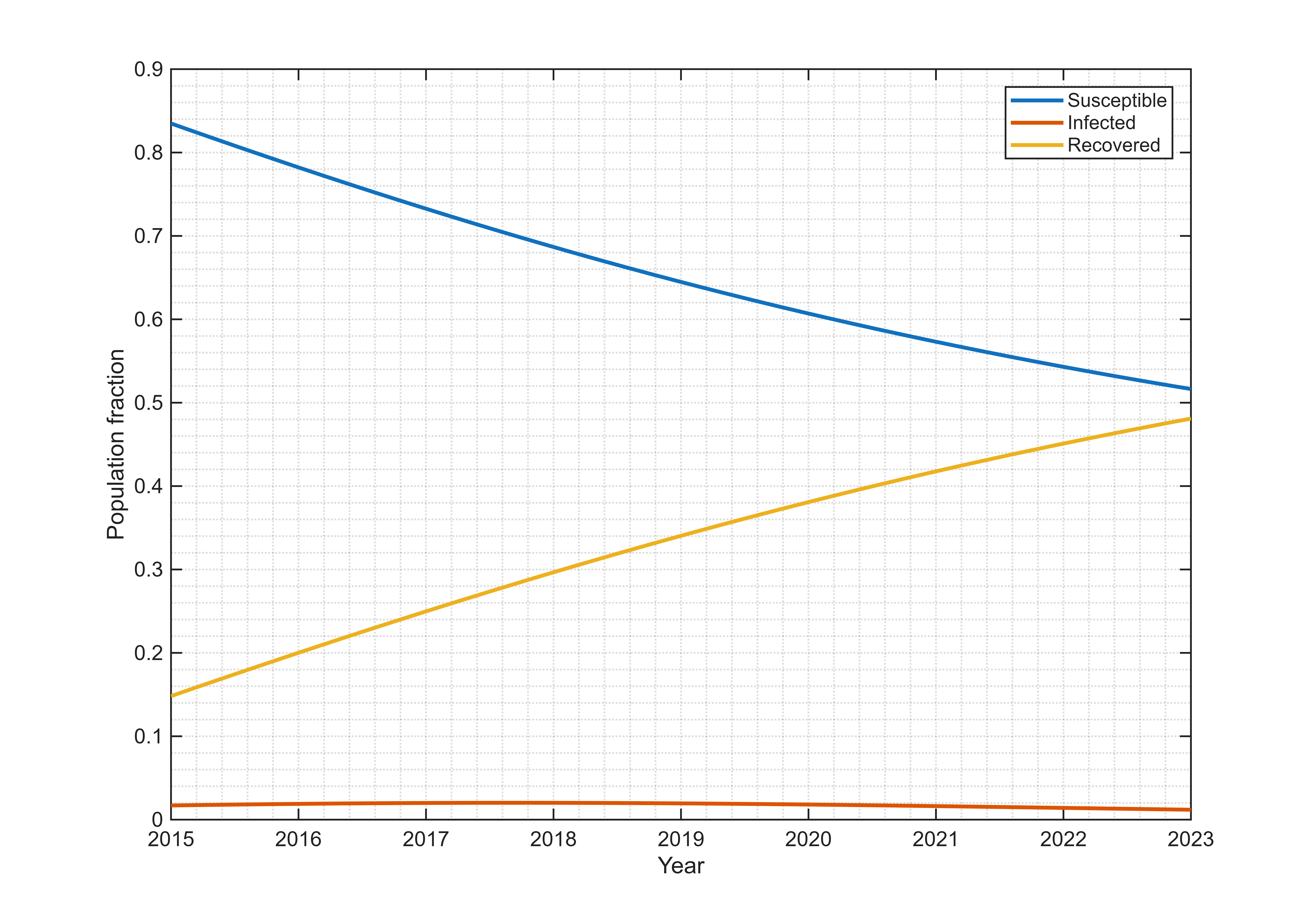}
    \caption{Temporal dynamics of the susceptible, infected, and recovered population fractions generated by the India-calibrated HBV model over the period 2015--2023.}
    \label{HBV_calibrated}
\end{figure}

\subsection{Model-data validation}
\label{subsec:model_data_validation}

The calibrated model reproduces the prescribed epidemiological targets with very small residual errors. The comparison between the epidemiological targets and calibrated model outputs is presented in Table~\ref{tab:model_data_comparison}.

\begin{table}[htbp]
\centering
\begin{tabular}{lccc}
\hline
Observable & Data & Model & Relative error (\%)\\
\hline
HBsAg prevalence, 2015
& $0.017000$
& $0.017000$
& $0.0000$\\

HBsAg prevalence, 2020
& $0.018000$
& $0.018000$
& $0.0000$\\

HBV incidence, 2023
& $0.00602720$
& $0.00602720$
& $0.0001$\\

HBV mortality, 2023
& $0.00005610$
& $0.00005610$
& $0.0000$\\
\hline
\end{tabular}
\caption{Comparison between the epidemiological targets and the corresponding calibrated model outputs.}
\label{tab:model_data_comparison}
\end{table}

As illustrated in Fig.~\ref{HBV_prevalence}, the calibrated trajectory closely matches the observed HBsAg prevalence values for 2015 and 2020.

\begin{figure}[htbp]
    \centering
    \includegraphics[width=0.6\linewidth]{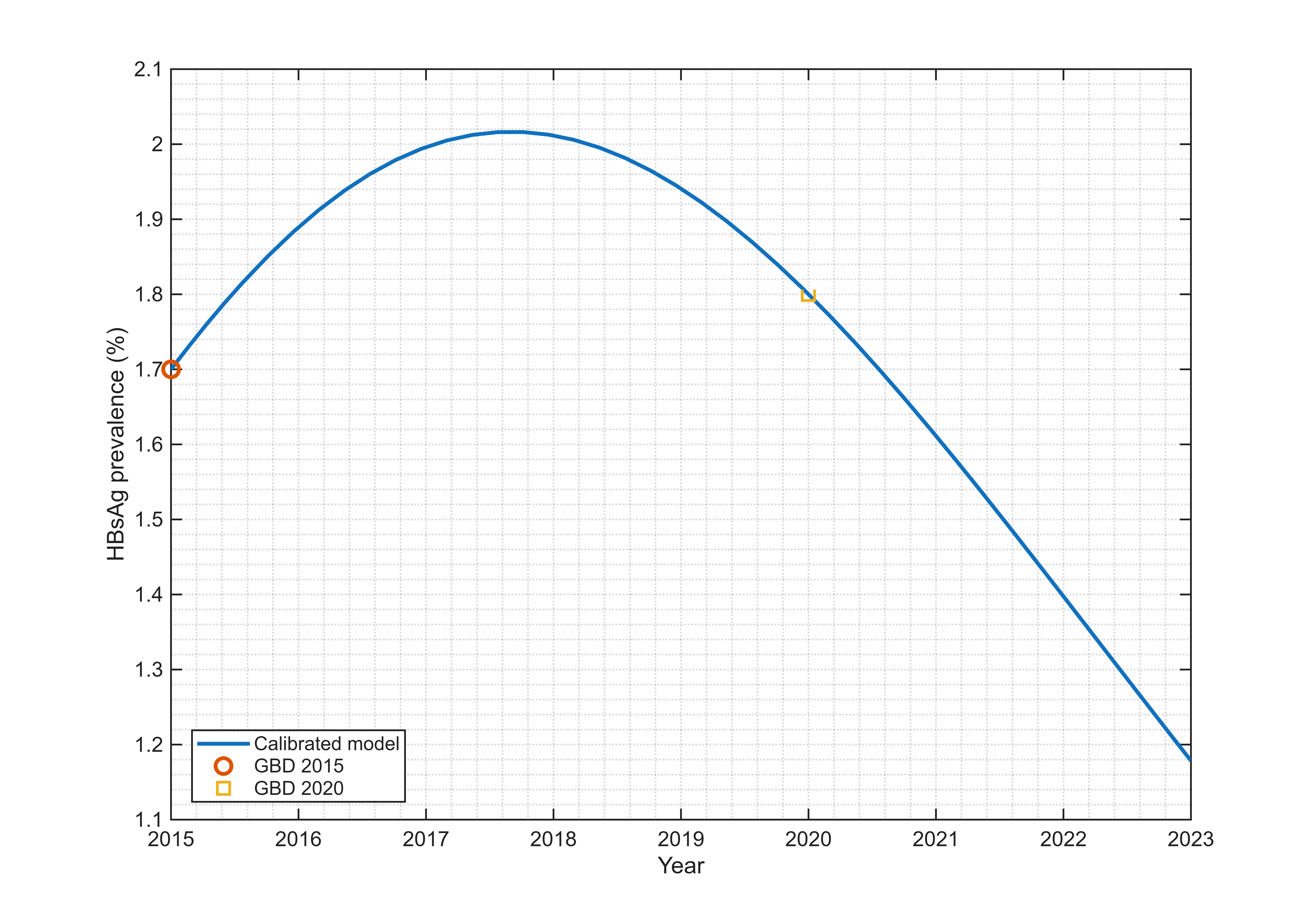}
    \caption{Comparison of the calibrated model-derived HBsAg prevalence with the corresponding GBD estimates for India in 2015 and 2020.}
    \label{HBV_prevalence}
\end{figure}

The calibrated parameter set simultaneously reproduces the available prevalence, incidence, and mortality targets, with the small value of the calibration objective indicating close agreement under the adopted objective function.

The calibrated model was further evaluated against the GBD 2023 estimates of HBV incidence and mortality. As shown in Fig.~\ref{Incidence_Mortality_comparison}, the calibrated model outputs agree with the corresponding GBD 2023 estimates for India.

\begin{figure}[htbp]
    \centering

    \begin{minipage}[b]{0.48\textwidth}
        \centering
        \includegraphics[width=\textwidth]{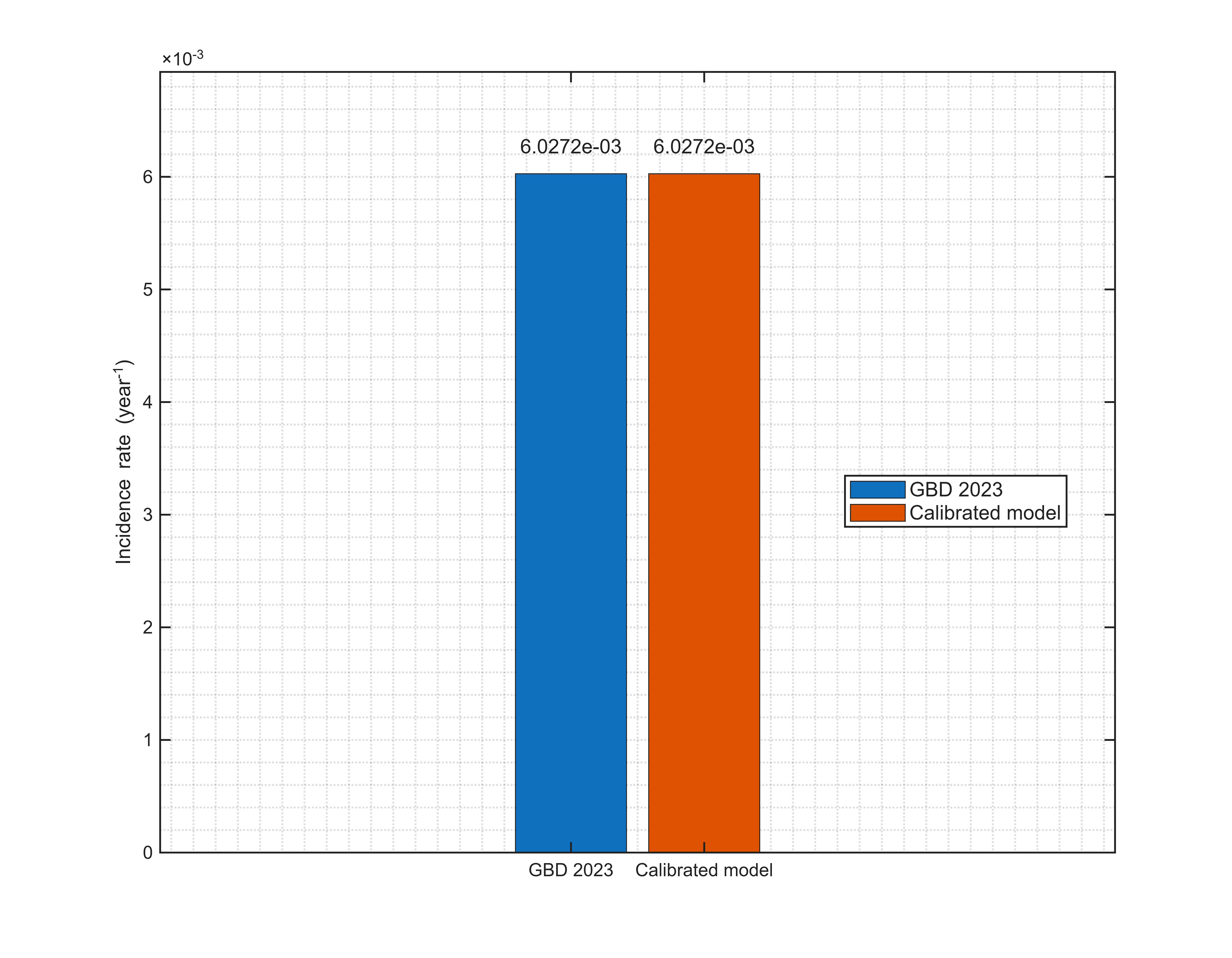}
        \caption*{(a) HBV incidence rate}
    \end{minipage}
    \hfill
    \begin{minipage}[b]{0.48\textwidth}
        \centering
        \includegraphics[width=\textwidth]{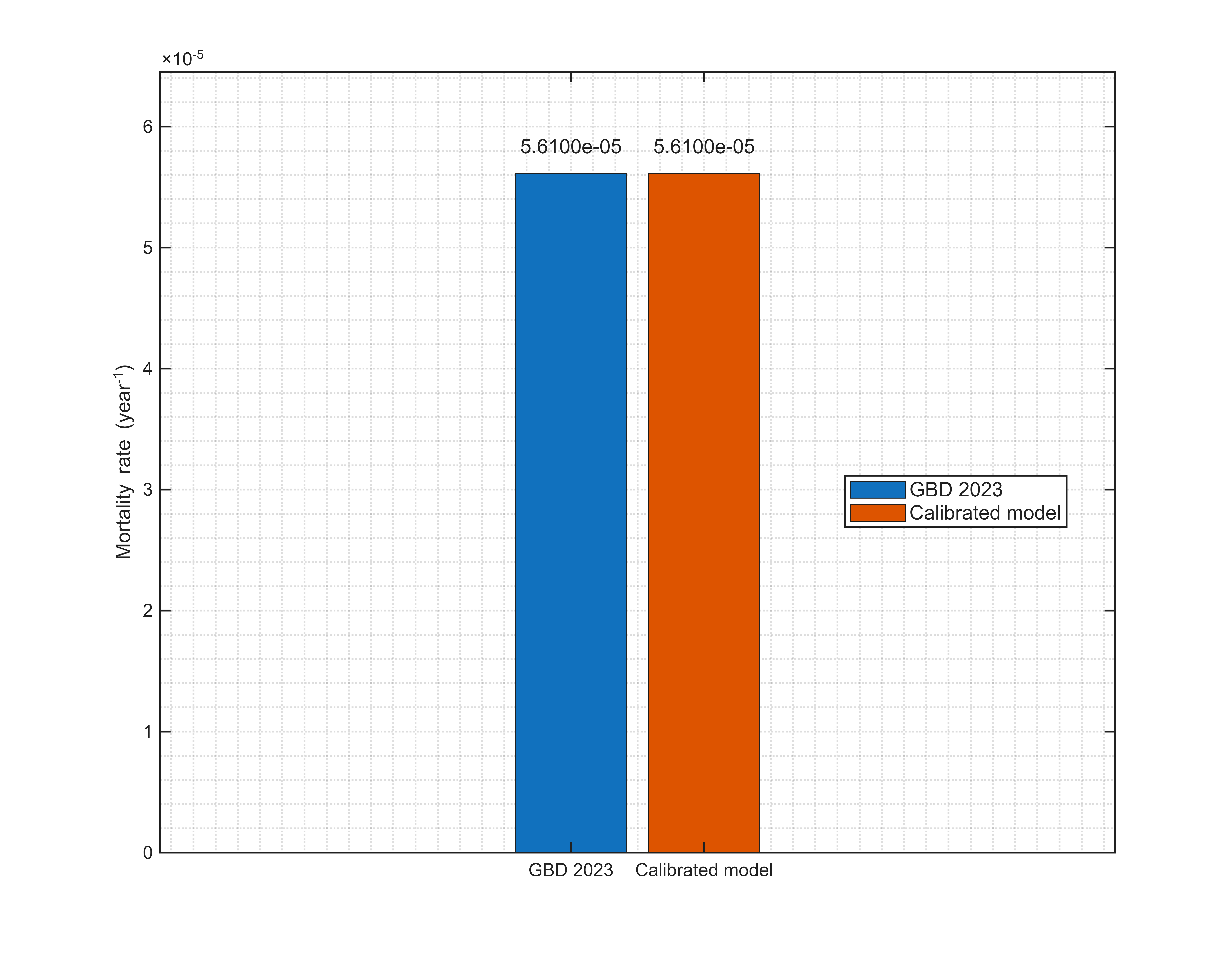}
        \caption*{(b) HBV mortality rate}
    \end{minipage}

    \caption{Comparison of the calibrated HBV model with GBD 2023 estimates for India: (a) HBV incidence rate and (b) HBV mortality rate.}
    \label{Incidence_Mortality_comparison}
\end{figure}

Nevertheless, a small optimization residual does not by itself imply that every individual parameter is uniquely identifiable. With aggregate epidemiological observations, different combinations of transmission, vaccination, treatment, and mortality parameters may produce similar model outputs. Therefore, practical identifiability is assessed using profile-based analysis, together with the robustness of $\mathcal{R}_0$ across the resulting parameter ranges.

\subsection{Multi-start calibration and solution robustness}
\label{subsec:multistart}

To assess the dependence of the calibration results on the initial parameter guess, a multi-start calibration procedure was performed. Six distinct admissible initial parameter sets were selected, and the constrained nonlinear optimization problem was solved independently from each starting point.

The resulting objective function values were

\begin{equation}
\begin{aligned}
J_1 &= 8.481253\times10^{-11},\\
J_2 &= 3.826401\times10^{-12},\\
J_3 &= 1.029570\times10^{-13},\\
J_4 &= 2.207182\times10^{-11},\\
J_5 &= 9.671759\times10^{-11},\\
J_6 &= 2.305351\times10^{-11}.
\end{aligned}
\end{equation}

All six runs converged to parameter combinations producing very small objective function values, although the resulting parameter vectors were not identical. In particular, the estimated transmission parameter $\beta$ varied from approximately $0.356$ to $1.451$, while the treatment rate $\nu_2$ varied from approximately $0.307$ to $0.795$.

These results indicate that multiple parameter combinations can reproduce the available epidemiological observations with similar accuracy, consistent with parameter compensation. This motivates the profile-based practical identifiability analysis in the following subsection.

\subsection{Practical identifiability analysis}
\label{subsec:practical_identifiability}

The multi-start calibration demonstrates that highly accurate fits can be obtained from different initial parameter guesses but does not quantify the extent to which the available observations constrain individual model parameters. Therefore, a profile-based practical identifiability analysis was performed \cite{dankwa2022structural,eisenberg2013identifiability,kao2018practical,tuncer2018structural}.

For each parameter of interest, the parameter was fixed sequentially over its prescribed admissible range, while all remaining calibrated parameters were re-optimized. This generates the profile objective function $J_{\mathrm{prof}}(\theta)$. A parameter value was considered practically compatible with the available observations when

\begin{equation}
J_{\mathrm{threshold}}=10J_{\min},
\end{equation}

where

\begin{equation}
J_{\min}
=
4.024540117879\times10^{-8}.
\end{equation}

Consequently,

\begin{equation}
J_{\mathrm{threshold}}
=
4.024540117879\times10^{-7}.
\end{equation}

The recruitment parameter $\lambda$ was not included in the profile analysis because it was treated as a demographically constrained parameter. The analysis was therefore restricted to $\beta$, $\nu_1$, $\nu_2$, $\sigma$, and $R_{2015}$.

The profile-based results are illustrated in Fig.~\ref{profile_identifiability}. Each panel shows the minimized objective function as the corresponding parameter is varied, with the remaining parameters re-optimized. The horizontal dashed line represents $J_{\mathrm{threshold}}$, while the vertical dotted line indicates the best-fitting value.

\begin{figure}[ht]
    \centering
    \includegraphics[width=0.6\linewidth]{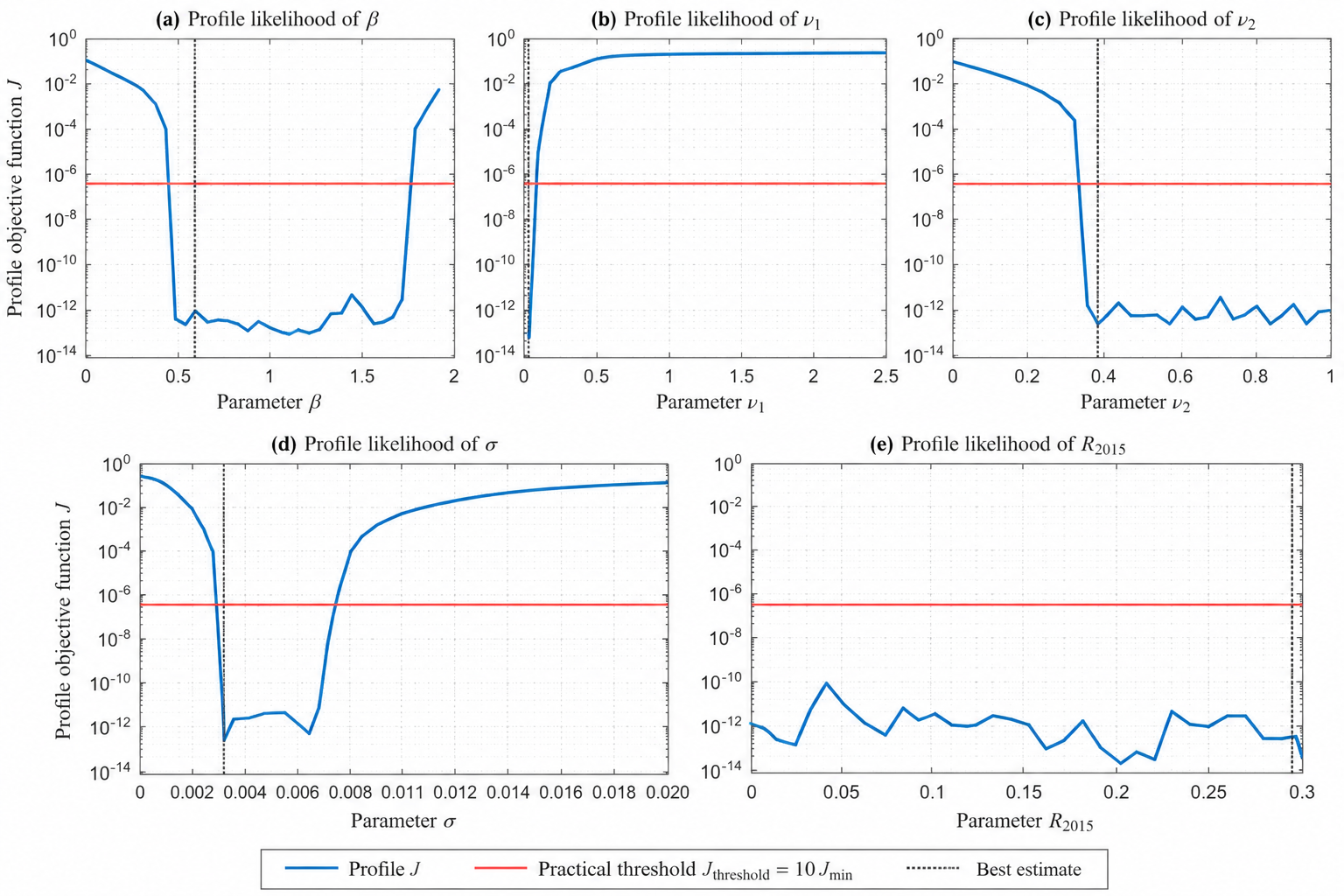}
    \caption{Profile-based practical identifiability analysis of the calibrated HBV model parameters. The minimized objective function $J$ is shown for each profiled parameter, with the remaining parameters re-optimized. The horizontal dashed line denotes the practical threshold $J_{\mathrm{threshold}}=10J_{\min}$, while the vertical dotted line indicates the best-fitting parameter estimate.}
    \label{profile_identifiability}
\end{figure}

The resulting practically compatible parameter ranges are summarized in Table~\ref{tab:identifiability_ranges}.

\begin{table}[ht]
\centering
\begin{tabular}{lcc}
\hline
Parameter & Lower bound & Upper bound\\
\hline
$\beta$ & $0.40080000$ & $1.80010000$\\
$\nu_1$ & $0.00100000$ & $0.00505867$\\
$\nu_2$ & $0.30070000$ & $1.00000000$\\
$\sigma$ & $0.00308985$ & $0.00600070$\\
$R_{2015}$ & $0.00000000$ & $0.30000000$\\
\hline
\end{tabular}
\caption{Practically compatible parameter ranges obtained from the profile-based identifiability analysis.}
\label{tab:identifiability_ranges}
\end{table}

The profiles reveal different levels of practical parameter information. The transmission parameter $\beta$ exhibits the broad profile-compatible interval

\[
0.4008\leq\beta\leq1.8001,
\]

while the treatment rate satisfies

\[
0.3007\leq\nu_2\leq1.0000.
\]

The vaccination rate has the comparatively narrower compatible interval

\[
0.0010\leq\nu_1\leq0.00505867,
\]

whereas the initial infected fraction $R_{2015}$ remains compatible throughout

\[
0\leq R_{2015}\leq0.30.
\]

For the disease-induced mortality parameter,

\[
0.00308985\leq\sigma\leq0.00600070
\]

remains practically compatible with the observations.

Overall, the profile analysis demonstrates that the epidemiological observations strongly constrain the model outputs while providing weaker constraints on several individual mechanistic parameters. The broad profile-compatible regions indicate parameter compensation, whereby changes in one parameter can be offset by re-optimization of the remaining parameters. Consequently, the calibrated parameter vector should be interpreted as a representative epidemiologically consistent parameterization rather than as a uniquely identifiable estimate of every underlying biological transition rate.

\subsection{Robustness of the basic reproduction number}
\label{subsec_R0_robustness}

Since the basic reproduction number $\mathcal{R}_0$ is a central threshold
quantity in the subsequent dynamical analysis, its robustness with respect
to the practically compatible parameter ranges was examined. For each
parameter profile considered in the practical identifiability analysis,
the corresponding value of $\mathcal{R}_0$ was evaluated at each practically
compatible parameter value, using the remaining parameters at their
profile-optimized values. The resulting ranges are summarized in
Table~\ref{tab:R0_robustness}.

\begin{table}[ht]
\centering
\begin{tabular}{lcc}
\hline
Profiled parameter & Minimum $\mathcal{R}_0$ & Maximum $\mathcal{R}_0$\\
\hline
$\beta$ & $0.30572006$ & $1.18328873$\\
$\nu_1$ & $1.17516722$ & $1.47418921$\\
$\nu_2$ & $0.23144022$ & $1.41587808$\\
$\sigma$ & $0.27525802$ & $1.17533417$\\
$R_{2015}$ & $0.25833621$ & $1.39411413$\\
\hline
\end{tabular}
\caption{Ranges of the basic reproduction number $\mathcal{R}_0$ obtained over the practically compatible parameter profiles.}
\label{tab:R0_robustness}
\end{table}

The robustness results demonstrate that the inferred value of
$\mathcal{R}_0$ varies across the practically compatible parameter space.
The profiles associated with $\beta$, $\nu_2$, $\sigma$, and $R_{2015}$
permit values of $\mathcal{R}_0$ both below and above the epidemic
threshold $\mathcal{R}_0=1$. Thus, these four profiles correspond to both
sub-threshold and super-threshold transmission regimes within the
practically compatible parameter space.

In contrast, the practically compatible $\nu_1$ profile gives

\begin{equation}
1.17516722
\leq
\mathcal{R}_0
\leq
1.47418921,
\end{equation}

so that the entire corresponding range remains above unity.

For the best-fitting calibrated parameter vector, the model gives

\begin{equation}
\mathcal{R}_0^{*}
=
1.174428194141
>
1.
\end{equation}

Therefore, the reference parameterization corresponds to a super-threshold
transmission regime. However, the profile analysis demonstrates that this
conclusion is not uniformly preserved throughout the practically
compatible parameter space.

The robustness analysis is illustrated in
Fig.~\ref{R0_robustness_profiles}, where the individual $\mathcal{R}_0$
profiles and the corresponding minimum--maximum ranges are presented. The
horizontal dashed line denotes the epidemic threshold
$\mathcal{R}_0=1$, while the vertical reference line indicates the
best-fitting value of the profiled parameter.

\begin{figure}[ht]
    \centering
    \includegraphics[width=0.85\linewidth]{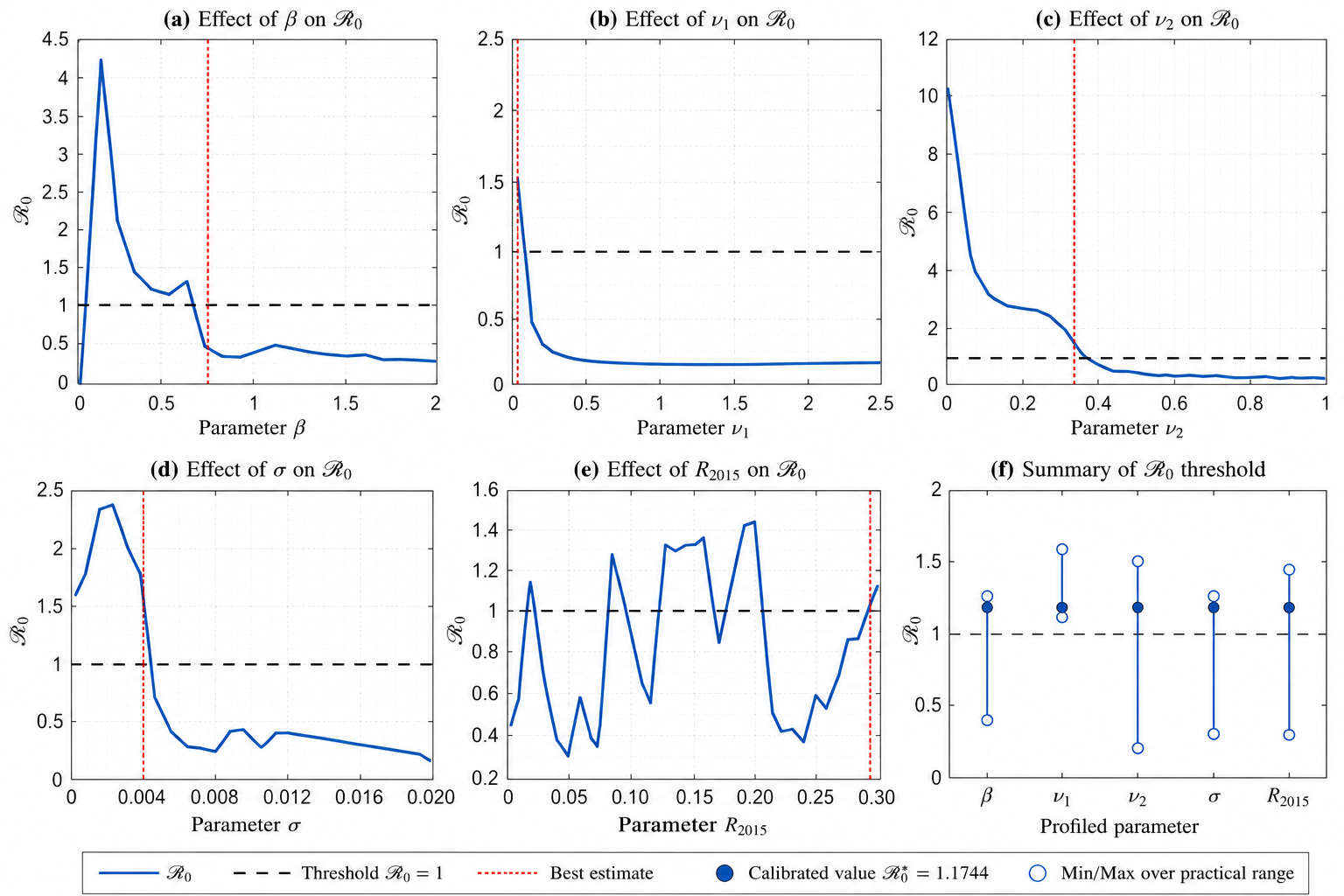}
    \caption{Robustness of the basic reproduction number $\mathcal{R}_0$ over the practically compatible parameter profiles.}
    \label{R0_robustness_profiles}
\end{figure}

The results indicate that the numerical value
$\mathcal{R}_0^{*}=1.1744$ should be interpreted as the threshold quantity
associated with the selected reference calibration rather than as a
uniquely determined epidemiological quantity. The $\nu_1$ profile provides
a comparatively robust qualitative result because its entire practically
compatible range remains above the epidemic threshold.

Overall, uncertainty in the calibrated parameter vector can propagate into
uncertainty in the inferred transmission regime. Accordingly, subsequent
threshold-dependent conclusions are interpreted with respect to the
reference calibrated parameterization, while the profile-based
$\mathcal{R}_0$ ranges provide an uncertainty framework for the resulting
dynamical conclusions.

\subsection{Summary of the data-driven parameterization}
\label{subsec:data_parameterization_summary}

The data-driven analysis establishes a quantitative parameterization of the
proposed HBV model using India-specific epidemiological and demographic
information. The available incidence, mortality, and HBsAg prevalence
estimates were incorporated as epidemiological targets in a constrained
nonlinear calibration procedure. Multi-start calibration and profile-based
practical identifiability analysis were then used to assess the extent to
which the available aggregate data constrain the model parameters.

The reference calibration yields the minimum objective function

\begin{equation}
J_{\min}
=
4.024540117879\times10^{-8},
\end{equation}

with the calibrated parameter values

\begin{equation}
\begin{aligned}
\beta^* &= 0.5098268496,\\
\nu_1^* &= 0.0050586726,\\
\nu_2^* &= 0.3145988771,\\
\sigma^* &= 0.0030898517,\\
R_{2015}^* &= 0.2980581902.
\end{aligned}
\end{equation}

The calibration provides close simultaneous agreement with the
epidemiological targets. However, the multi-start and profile-based
analyses demonstrate that similarly accurate fits can be obtained from
different parameter combinations and that the practically compatible
parameter ranges differ substantially in width. Thus, the available
aggregate epidemiological data strongly constrain the model outputs but do
not uniquely determine all underlying mechanistic parameters.

For the reference calibrated parameter vector,

\begin{equation}
\mathcal{R}_0^*
=
1.174428194141
>
1.
\end{equation}

Hence, the reference parameterization corresponds to a super-threshold
transmission regime. Nevertheless, the $\mathcal{R}_0$ robustness analysis
shows that practically compatible parameter combinations can generate both
sub-threshold and super-threshold values of $\mathcal{R}_0$, whereas the
entire practically compatible $\nu_1$ profile remains above unity.

Accordingly, the calibrated parameter set is used as the reference
parameterization for the subsequent dynamical analysis, while the
practical parameter ranges and corresponding robustness of $\mathcal{R}_0$
are retained when interpreting threshold-dependent results.

\section{Discussion}
\label{sec_discussion}

The present study developed and analyzed a vaccination-treatment model for
HBV transmission to investigate how transmission and intervention
mechanisms jointly determine disease persistence. The analytical and
numerical results provide a consistent description of the epidemic
threshold, equilibrium structure, bifurcation behavior, and intervention
effects. In particular, the basic reproduction number $\mathcal{R}_0$
provides the fundamental threshold criterion, while the equilibrium and
bifurcation analyses describe the system behavior as this threshold is
crossed.

The equilibrium and bifurcation analyses demonstrate that the disease-free
and endemic states are governed by the threshold condition
$\mathcal{R}_0=1$. The transition through this threshold occurs through a
forward transcritical bifurcation with respect to the transmission
parameter $\beta$. This result is independently supported by the numerical
equilibrium branches and Jacobian eigenvalue analysis, which demonstrate
the stability exchange between the disease-free and endemic equilibria at
the critical transmission level.

The two-parameter analysis further characterizes the threshold structure by
considering transmission and vaccination simultaneously. The curve
$\mathcal{R}_0(\beta,\nu_1)=1$ identifies combinations of transmission
intensity and vaccination separating sub-threshold and super-threshold
regimes. The corresponding endemic-equilibrium analysis shows that
vaccination can reduce the magnitude of persistent infection even when
transmission remains sufficiently high for an endemic state to exist.
Thus, vaccination influences both the epidemic threshold and the endemic
infection burden.

The sensitivity analyses provide further insight into the mechanisms
underlying these threshold changes. Transmission-related parameters
contribute positively to epidemic persistence, whereas vaccination and
treatment act in the direction of disease suppression. The agreement
between the local normalized sensitivity analysis and the global
LHS-PRCC analysis indicates that these qualitative effects are maintained
beyond the single baseline parameterization.

An important aspect of the study is the integration of the analytical model
with India-specific epidemiological information. The data-driven
calibration produced close correspondence between the model outputs and
the prescribed HBsAg prevalence, incidence, and mortality targets. The
multi-start analysis showed that highly accurate fits could be obtained
from different initial parameter values, although the resulting parameter
combinations were not identical. Thus, a close fit to aggregate
epidemiological observations does not necessarily imply unique
determination of the underlying mechanistic parameters.

The profile-based practical identifiability analysis further demonstrated
substantial parameter compensation, whereby changes in one parameter can
be offset by re-optimization of other parameters while maintaining a
comparably good fit to the available observations. The relatively
localized profile for $\sigma$ indicates that this parameter is more
strongly constrained by the available targets. Overall, the calibrated
model is well supported at the level of epidemiological observables,
whereas greater caution is required when assigning biological
interpretations to individual parameter estimates.

The practical identifiability results also affect the interpretation of
the reproduction number. The reference calibrated parameterization gives

\begin{equation}
\mathcal{R}_0^*=1.1744>1,
\end{equation}

corresponding to a super-threshold regime. However, practically compatible
parameter combinations can generate values of $\mathcal{R}_0$ both below
and above unity. Therefore, $\mathcal{R}_0^*$ should be interpreted as the
reproduction number associated with the selected reference
parameterization rather than as a uniquely determined population-level
constant. The profile analysis instead quantifies the range of threshold
behavior compatible with the available epidemiological information.

From an intervention perspective, the combined analytical and data-driven
results indicate that vaccination and treatment should be considered in
relation to the prevailing transmission environment. The two-parameter
threshold curve demonstrates the relationship between transmission and
vaccination required to maintain a sub-threshold regime, while the
endemic-equilibrium and sensitivity analyses show that intervention
intensity also affects the magnitude of persistent infection. These
results provide a mathematical basis for examining combinations of
prevention and treatment strategies rather than focusing exclusively on a
single intervention.

Several limitations should nevertheless be acknowledged. The model is
deterministic and represents population-level average dynamics and
therefore does not explicitly account for stochastic demographic effects,
age-dependent transmission, spatial heterogeneity, behavioral variation,
or more detailed stages of HBV infection. In addition, the calibration
relies on a limited set of aggregate epidemiological targets, contributing
to the practical uncertainty observed in several mechanistic parameters.
The resulting parameter estimates and threshold values should therefore be
interpreted as model-specific quantities associated with the selected
epidemiological setting rather than as universal biological constants.

These limitations provide several directions for future investigation.
Incorporating age structure, time-dependent vaccination and treatment,
spatial heterogeneity, stochastic effects, and more extensive longitudinal
HBV surveillance data could improve parameter determination and allow more
detailed population-specific predictions. Additional prevalence,
incidence, treatment, vaccination, and mortality observations would also
be valuable for reducing parameter compensation and strengthening the
empirical basis of the threshold analysis.

Overall, the study combines mathematical analysis with data-driven
parameterization to describe HBV persistence through epidemic thresholds,
equilibrium stability, bifurcation, intervention geometry, sensitivity,
calibration, and practical identifiability, while explicitly recognizing
the uncertainty associated with limited epidemiological information.

\section{Conclusions}
\label{sec_conclusion}

In this study, we developed and rigorously analyzed a vaccination-treatment
model for HBV transmission, with emphasis on the threshold, equilibrium,
and bifurcation structure governing disease persistence. The disease-free
and endemic equilibria were characterized, and the basic reproduction
number $\mathcal{R}_0$ was derived as the principal threshold quantity.
The analytical results established a forward transcritical bifurcation at
$\mathcal{R}_0=1$ with respect to the transmission parameter $\beta$,
while numerical equilibrium continuation and eigenvalue calculations
confirmed the predicted stability transition.

A central contribution is the extension of the threshold analysis to a
two-parameter geometry. By simultaneously varying transmission and
vaccination, the threshold curve
$\mathcal{R}_0(\beta,\nu_1)=1$ divides the parameter plane into
sub-threshold and super-threshold regions. The corresponding
endemic-equilibrium analysis and sensitivity results further demonstrate
the influence of vaccination, treatment, and transmission on persistent
infection.

The model was further parameterized using India-specific epidemiological
information, including HBsAg prevalence, HBV incidence, and mortality
estimates. The reference calibration achieved

\begin{equation}
J_{\min}=4.024540117879\times10^{-8},
\end{equation}

indicating close agreement between the calibrated model and the prescribed
epidemiological targets. Multi-start calibration and profile-based
practical identifiability analysis showed that similarly accurate fits can
be obtained from different parameter combinations, with substantial
uncertainty remaining in several individual mechanistic parameters.

For the reference calibrated parameterization,

\begin{equation}
\mathcal{R}_0^*=1.174428194141>1,
\end{equation}

corresponding to a super-threshold transmission regime. However, the
robustness analysis demonstrated that practically compatible parameter
combinations can produce both sub-threshold and super-threshold values of
$\mathcal{R}_0$. Thus, the reference value
$\mathcal{R}_0^*=1.1744$ should be interpreted as the reproduction number
associated with the selected reference parameterization rather than as a
uniquely determined epidemiological quantity.

Overall, the study provides an integrated mathematical and data-driven
framework connecting epidemic thresholds, equilibrium stability, forward
transcritical bifurcation, two-parameter intervention geometry,
sensitivity, calibration, and practical identifiability. The results
demonstrate the influence of vaccination and treatment on HBV transmission
and persistence while highlighting the need for additional epidemiological
information to more precisely constrain several underlying mechanistic
parameters. The framework provides a basis for further investigation of
population-specific HBV control strategies under epidemiological and
parameter uncertainty.

\bibliographystyle{ieeetr}
\bibliography{biblog}
\end{document}